\theoremstyle{plain}
\newtheorem{theorem}{Theorem}[section]
\newtheorem{lemma}[theorem]{Lemma}
\newtheorem{proposition}[theorem]{Proposition}
\newtheorem{corollary}[theorem]{Corollary}
\numberwithin{equation}{section}
\theoremstyle{definition}
\newtheorem{definition}[theorem]{Definition}
\newtheorem{remark}[theorem]{Remark}
\newtheorem{notation}[theorem]{Notation}
\DeclareMathOperator{\Ob}{Ob}
\DeclareMathOperator{\Mor}{Mor}
\DeclareMathOperator{\Module}{-Mod}
\DeclareMathOperator{\module}{-mod}
\DeclareMathOperator{\Hom}{Hom}
\DeclareMathOperator{\Ext}{Ext}
\DeclareMathOperator{\Ker}{Ker}
\DeclareMathOperator{\op}{op}
\DeclareMathOperator{\irr}{Irr}
\newcommand{\C}{{\mathscr{C}}}
\newcommand{\D}{{\mathsf{D}}}
\newcommand{\F}{{\mathbb{F}}}
\newcommand{\FI}{{\mathrm{FI}}}
\newcommand{\im}{{\mathrm{Im}}}
\newcommand{\J}{{\mathscr{J}}}
\newcommand{\la}{{\underline{\lambda}}}
\newcommand{\PP}{{\mathrm{P}}}
\newcommand{\q}{{\mathsf{Q}}}
\newcommand{\s}{{\mathsf{S}}}
\newcommand{\VI}{{\mathrm{VI}}}
\newcommand{\Z}{{\mathbb{Z}}}
\newcommand\bigCircledast{\mathop{\mathpalette\b@gCircledast\relax}}
\newcommand\b@gCircledast[2]{
\vcenter{\hbox{\m@th
\scalebox{\ifx#1\displaystyle 2.3\else1.2\fi}{$#1\circledast$}
}}}
\title{Coinduction functor in representation stability theory}
\author{Wee Liang Gan}
\address{Department of Mathematics, University of California, Riverside, CA 92521, USA}
\email{wlgan@math.ucr.edu}
\author{Liping Li}
\address{College of Mathematics and Computer Sciences, Hunan Normal University, Changsha, 410081, China.
}
\email{lipingli@hunnu.edu.cn}
\begin{document}

\begin{abstract}
We study the coinduction functor on the category of FI-modules and its variants. Using the coinduction functor, we give new proofs of (generalizations of) various results on homological properties of FI-modules. We also prove that any finitely generated projective VI-module  over a field of characteristic 0 is injective.
\end{abstract}


\maketitle

\tableofcontents

\section{Introduction}

\subsection{Conventions}
Let $k$ be a commutative ring. By a category, we shall always mean a small category. If $\C$ is a category, a $\C$-module over $k$ is a functor from $\C$ to the category of $k$-modules. We shall refer to $\C$-modules over $k$ simply as $\C$-modules. A morphism of $\C$-modules is a natural transformation of functors. We shall write $\C\Module$ for the category of $\C$-modules, and $\C\module$ for the category of finitely generated $\C$-modules.

If $\C$ is a category and $m,n \in\Ob(\C)$, we shall write $k\C(m,n)$ for the free $k$-module on the set $\C(m,n)$. Let $k\C = \bigoplus_{m,n\in\Ob(\C)} k\C(m,n)$ and denote by $e_n \in \C(n,n)$ the identity morphism of $n\in \Ob(\C)$. We have a natural algebra structure on $k\C$ where the product of $\alpha\in \C(r,n)$ and $\beta\in \C(m,l)$ is defined to be the composition $\alpha\beta$ if $r=l$; it is defined to be $0$ if $r\neq l$. A $k\C$-module $V$ is said to be \emph{graded} if $V=\bigoplus_{n\in \Ob(\C)} e_n V$. If $V$ is a $\C$-module, then $\bigoplus_{n\in \Ob(C)} V(n)$ has a natural structure of a graded $k\C$-module which we shall also denote by $V$. Conversely, any graded $k\C$-module $V$ defines a natural $\C$-module denoted again by $V$, with $V(n)=e_n V$. Thus, we shall not distinguish between the notion of $\C$-modules and the notion of graded $k\C$-modules.

\subsection{Main results}
Let $\Z_+$ be the set of non-negative integers. For any $n\in\Z_+$, we write $[n]$ for the set $\{1,\ldots, n\}$; in particular, $[0]=\emptyset$. Let $G$ be a finite group.

\begin{definition}
Let $\FI_G$ be the category whose set of objects is $\Z_+$, and whose morphisms from $m$ to $n$ are all pairs $(f,c)$ where $f:[m]\to [n]$ is an injective map and $c:[m]\to G$ is an arbitrary map. The composition of morphisms $(f_1,c_1)\in \FI_G(m,l)$ and $(f_2,c_2)\in \FI_G(l,n)$ is
defined by $(f_2,c_2)(f_1,c_1) = (f_3,c_3)$ where
\begin{equation*}
f_3(t) = f_2(f_1(t)), \quad c_3(t) = c_2(f_1(t))c_1(t), \quad \mbox{ for all } t\in [m].
\end{equation*}
We write $\FI$ for $\FI_G$ when $G$ is trivial.
\end{definition}

Let $\F$ be a finite field.

\begin{definition}
Let $\VI$ be the category whose set of objects is $\Z_+$, and whose morphisms from $m$ to $n$ are all injective linear maps from $\F^m$ to $\F^n$. The composition of morphisms is defined to be the composition of maps.
\end{definition}

Suppose $\C$ is the category $\FI_G$ or $\VI$. There is a natural monoidal structure $\odot$ on $\C$ defined on objects by $m \odot n = m+n$. Define a functor $\iota : \C \to \C$ by
\begin{equation} \label{iota}
\iota(n) = 1\odot n, \quad \iota(\alpha) = e_1\odot \alpha, \quad \mbox{ for each } n\in \Ob(\C),\; \alpha\in \Mor(\C).
\end{equation}
The functor $\iota$ is faithful and gives rise to a restriction functor:
\begin{equation*}
\s  : \C\Module \longrightarrow \C\Module, \quad V \mapsto V\circ \iota.
\end{equation*}
From the usual tensor-hom adjunction, one can define a right adjoint functor $\q$ to $\s$ called the coinduction functor. The main results of this paper are the following.

\begin{theorem} \label{main result on q 1}
Suppose that $k$ is a commutative ring, and $\C$ is the category $\FI_G$. Let $m \in \Ob(\C)$. Then $\q(k \C e_m)$ is isomorphic to $k\C e_m\oplus k\C e_{m+1}$.
\end{theorem}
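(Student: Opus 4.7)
The plan is to construct an explicit natural map $\Phi \colon k\C e_m \oplus k\C e_{m+1} \to \q(k\C e_m)$ via the adjunction $\s \dashv \q$ and verify it is an isomorphism by a level-wise basis computation. By adjunction and Yoneda, $\q(V)(n) \cong \Hom_{\C\Module}(k\C(n, 1+-), V)$, where $k\C(n, 1+-)$ denotes the $\C$-module $r \mapsto k\FI_G(n, 1+r)$; analyzing $\q(k\C e_m)$ thus amounts to analyzing natural transformations $k\C(n, 1+-) \to k\C(m, -)$.

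The key structural input is a decomposition $k\C(n, 1+-) \cong A_n \oplus B_n$ of $\C$-modules, where $A_n(r)$ is spanned by those $\beta \in \FI_G(n, 1+r)$ with $1 \notin \im(\beta)$ and $B_n(r)$ by those with $1 \in \im(\beta)$. Shifting down $[1+r] \setminus \{1\} \cong [r]$ gives $A_n \cong k\C e_n$, freely generated by the shift morphism $\sigma_n \colon n \to 1+n$ carrying trivial $G$-labels. Recording the index $i_0 \in [n]$ where $\beta$ sends to $1$, its $G$-label $g \in G$, and the remaining morphism identifies $B_n \cong \bigoplus_{(i_0, g) \in [n] \times G} k\C e_{n-1}$, freely generated at level $n-1$ by morphisms $\beta^n_{i_0, g} \colon n \to n$. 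Consequently $\q(k\C e_m)(n)$ is a free $k$-module with basis indexed by $\FI_G(m, n) \sqcup ([n] \times G \times \FI_G(m, n-1))$, and the bijection $(i_0, g, \gamma) \leftrightarrow \tilde\gamma$ with $\tilde\gamma(m+1) = i_0$ of label $g$ exhibits a rank match with $\FI_G(m, n) \sqcup \FI_G(m+1, n)$.

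To construct $\Phi$, the adjunction reduces the task to defining $\chi = (\chi_1, \chi_2) \colon \s(k\C e_m) \oplus \s(k\C e_{m+1}) \to k\C e_m$. I would define $\chi_1(\beta)$ to be the shifted-down morphism $\bar\beta$ when $1 \notin \im(\beta)$, and $0$ otherwise; and $\chi_2(\beta)$ for $\beta = (f, c) \in \FI_G(m+1, 1+r)$ to be the restrict-and-shift morphism of $\beta|_{[m]}$ when $f(m+1) = 1$ and $c(m+1) = 1_G$, and $0$ otherwise. Both are natural transformations by direct check using the composition rule in $\FI_G$. Evaluating $\Phi(n)$ on the domain basis: $\Phi(\gamma_1, 0)$ sends $\sigma_n \mapsto \gamma_1$, with residual contributions at each $\beta^n_{i_0, g}$ with $i_0 \notin \im(\gamma_1)$; while $\Phi(0, \tilde\gamma)$, for $\tilde\gamma$ encoded as $(i_0, g, \gamma)$, sends $\beta^n_{i_0, g^{-1}} \mapsto \gamma$ and annihilates the other generators. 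The matrix of $\Phi(n)$ with respect to the bases above is thus block upper-triangular, whose diagonal blocks are the identity and the permutation $(i_0, g) \mapsto (i_0, g^{-1})$, both invertible over $\Z$; hence $\Phi(n)$ is a $k$-module isomorphism for any commutative ring $k$.

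The main obstacle is the definition of $\chi_2$: a naive version dropping the condition $c(m+1) = 1_G$ collapses the $G$-label information at position $m+1$ and makes $\Phi_2$ non-injective. The specific condition, together with the involution $g \mapsto g^{-1}$ that manifests in the final bijection, is the delicate ingredient that ensures $\Phi$ is an isomorphism over an arbitrary commutative ring $k$.
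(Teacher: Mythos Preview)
Your proof is correct and follows essentially the same approach as the paper: the decomposition $\s(k\C e_n)\cong A_n\oplus B_n$ is exactly the paper's isomorphism $\Phi_n$ (your generators $\sigma_n$ and $\beta^n_{i_0,g}$ are the paper's $(f_n,c_n)$ and $(f_{n,i_0,g},c_{n,i_0,g})$), and your $\Phi_2$ is, after moving the distinguished slot of $[m+1]$ from $m+1$ to $1$, the inverse of the paper's explicit isomorphism $\Theta\colon U\to k\C e_{m+1}$. The only organizational differences are that the paper splits off $k\C e_m$ abstractly via projectivity rather than through your explicit section $\chi_1$, and records the module structure of $\q(k\C e_m)$ in a separate lemma instead of letting the adjunction carry that bookkeeping; note also that with your chosen ordering the matrix of $\Phi(n)$ is block \emph{lower}-triangular, which is of course harmless.
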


The proof of Theorem \ref{main result on q 1} will be given in Section \ref{FI_G section}.

Denote by $q$ the number of elements of  $\F$.

\begin{theorem} \label{main result on q 2}
Suppose that $k$ is a commutative ring, and $\C$ is the category $\VI$. Let $m\in \Ob(\C)$. If $q$ is a unit of $k$, then $\q(k\C e_m)$ contains a direct summand isomorphic to $k\C e_{m+1}$.
\end{theorem}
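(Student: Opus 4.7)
My plan is to invoke the adjunction $\s \dashv \q$ to rephrase the assertion in terms of a natural transformation out of $\s(k\VI e_{m+1})$, and then to construct an explicit splitting using a canonical decomposition of that module. By Yoneda and the adjunction, $\Hom_{k\VI}(k\VI e_{m+1}, \q(k\VI e_m)) \cong \Hom_{k\VI}(\s(k\VI e_{m+1}), k\VI e_m)$, so exhibiting an embedding $i \colon k\VI e_{m+1}\hookrightarrow \q(k\VI e_m)$ amounts to specifying a natural transformation $\psi \colon \s(k\VI e_{m+1}) \to k\VI e_m$. Splitness of $i$ corresponds to the existence of a $k\VI$-module retraction $r \colon \q(k\VI e_m) \to k\VI e_{m+1}$ with $r\circ i_\psi = \mathrm{id}$, which must be constructed separately.

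For $f \in \VI(m+1, 1+l) = \s(k\VI e_{m+1})(l)$, write $\phi_f := \pi_1\circ f \in (\F^{m+1})^*$ for the first-coordinate functional and $\tilde f := \pi_l\circ f \colon \F^{m+1}\to \F^l$ for the remaining coordinates. Under the $k\VI$-action $f\mapsto (e_1\odot\alpha)\circ f$ (for $\alpha\colon l\to l'$ in $\VI$), the functional $\phi_f$ is preserved and $\tilde f$ is postcomposed with $\alpha$; hence both $\phi_f$ and the injectivity of $\tilde f$ are invariants of the $k\VI$-action. Partitioning accordingly yields a canonical $k\VI$-module decomposition
\[
\s(k\VI e_{m+1}) \;=\; \Bigl(\bigoplus_{\phi\in (\F^{m+1})^*} U_\phi\Bigr)\,\oplus\, N,
\]
where $U_\phi$ collects the $f$'s with $\phi_f=\phi$ and $\tilde f$ injective (canonically $\cong k\VI e_{m+1}$ via $f\mapsto \tilde f$), and $N$ gathers the remaining $f$'s (for which necessarily $\phi_f\neq 0$). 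This exhibits $q^{m+1}$ canonical copies of $k\VI e_{m+1}$ as direct summands of $\s(k\VI e_{m+1})$.

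I would then define $\psi$ summand-by-summand: on each $U_\phi \cong k\VI e_{m+1}$, compose with a $\phi$-dependent weight $\nu_\phi \in k\VI(m,m+1)$, regarded as a map $k\VI e_{m+1}\to k\VI e_m$ by postcomposition, and set $\psi|_N=0$. Concretely $\psi_l(f)=\tilde f\circ \nu_{\phi_f}$ if $\tilde f$ is injective and $0$ otherwise. The family $(\nu_\phi)_\phi$ will be taken as a $q$-normalized average (normalization by $q^{-(m+1)}$), which is where the hypothesis $q\in k^\times$ is used. In parallel I would build the retraction $r$ by evaluating a given $\xi\in \q(k\VI e_m)(n)=\Hom(\s(k\VI e_n),k\VI e_m)$ on a canonical family of test inputs indexed by $\beta\in \VI(m+1,n)$ and summing with matching weights; the identity $r\circ i_\psi = \mathrm{id}$ then reduces, after the $q$-cancellations, to a counting identity on $\F$-linear data.

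The main obstacle will be choosing the weights $\nu_\phi$ and the test inputs for $r$ so that all powers of $q$ introduced by the averaging cancel exactly against orbit-counting factors to produce the identity on $k\VI e_{m+1}$. The assumption $q\in k^\times$ is essential precisely here: without it, one cannot coherently collapse the $q^{m+1}$ candidate copies of $k\VI e_{m+1}$ inside $\s(k\VI e_{m+1})$ into a single canonical split embedding of $k\VI e_{m+1}$ inside $\q(k\VI e_m)$.
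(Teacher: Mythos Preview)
Your setup via the adjunction and your decomposition of $\s(k\VI e_{m+1})$ are both correct, and in fact your decomposition coincides with the paper's: your $U_\phi$ is the image of the paper's $\Phi_{m+1,u,0}$ for $\phi=u^t$, and your $N$ is exactly the sum of the $k\VI e_m$ summands indexed by pairs $(u,\ell)$. But the emphasis is backwards and the essential step is missing. Producing $i_\psi$ is free: since $k\VI e_{m+1}$ is the $\C$-module freely generated by $e_{m+1}$, \emph{any} element $\psi\in \q(k\VI e_m)(m+1)$ yields a homomorphism $i_\psi$, and nothing about choosing the $\nu_\phi$ or setting $\psi|_N=0$ buys you anything until a retraction is in hand. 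All the content therefore lies in your $r$, which you have not constructed. ``Evaluate $\xi$ on test inputs indexed by $\beta\in\VI(m+1,n)$ and sum with weights'' does not specify what the test inputs are (in which degree of $\s(k\VI e_n)$ do they live?), nor how an evaluation of $\xi$, which lands in $k\VI e_m$, is to yield a coefficient against a $\beta\in\VI(m+1,n)$.

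The paper takes the more economical route: it constructs only a \emph{surjective} $\C$-module map $\pi:\q(k\VI e_m)\to k\VI e_{m+1}$; splitness then follows immediately from projectivity of $k\VI e_{m+1}$, and no separate injection is needed. The explicit $\pi_n$ is far from a single $q$-average on the $U_\phi$'s: it is defined on the full decomposition of $\s(k\VI e_n)$ (using both the $k\VI e_n$ and the $k\VI e_{n-1}$ summands), carries a relative minus sign between the two types of terms, and is normalized by $q^{-n}$, varying with the degree $n$, not by a fixed $q^{-(m+1)}$. Verifying that this $\pi$ is a $\C$-module homomorphism is a substantial matrix computation and is the heart of the proof; your outline gives no indication of how that verification would go, and the phrase ``reduces to a counting identity'' understates what is actually required.
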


To prove Theorem \ref{main result on q 2}, we construct a non-obvious surjective homomorphism from $\q(k\C e_m)$ to $k\C e_{m+1}$. The details will be given in Section \ref{VI section}.

\subsection{Applications}
Let us indicate quickly the use of the coinduction functor in this paper.

Suppose $\C$ is the category $\FI_G$ or $\VI$. It is known that $\s(V)$ is projective if $V$ is a finitely generated projective $\C$-module. Hence, by the Eckman-Shapiro lemma, one has $\Ext^1_\C(\s(V),W) = \Ext^1_\C(V, \q(W))$ for any $\C$-modules $V$ and $W$. It follows that $\q(W)$ is injective if $W$ is injective.

Suppose now that $k$ is a field of characteristic 0. In this case, $\C$ is locally Noetherian. By reducing to finite categories, one can show that $k\C e_0$ is an injective $\C$-module. It follows by induction, using Theorems \ref{main result on q 1} and \ref{main result on q 2}, that $k\C e_n$ is injective for all $n\in\Ob(\C)$. Hence, we deduce the following result.

\begin{theorem} \label{main-result-1}
Suppose that $k$ is a field of characteristic 0, and $\C$ is the category $\FI_G$ or $\VI$. Let $V$ be a finitely generated projective $\C$-module. Then $V$ is an injective $\C$-module.
\end{theorem}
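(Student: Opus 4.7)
The plan is to prove by induction on $n$ that each $k\C e_n$ is an injective $\C$-module, and then deduce the theorem by observing that any finitely generated projective $\C$-module is a direct summand of a finite direct sum of such $k\C e_n$. Three ingredients are needed: the coinduction functor $\q$ preserves injectivity, the base case $n=0$, and the inductive step via Theorems \ref{main result on q 1} and \ref{main result on q 2}.

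For the first ingredient, I would elaborate the remark in the Applications paragraph. Since $\s$ sends finitely generated projectives to projectives, the Eckmann--Shapiro identity $\Ext^1_\C(\s V, W) \cong \Ext^1_\C(V, \q W)$ transfers the vanishing of $\Ext^1$ against finitely generated $V$ from $W$ to $\q W$. Because $\C$ is locally Noetherian over a field of characteristic zero, vanishing of $\Ext^1$ against finitely generated modules detects injectivity, so $\q$ preserves injectives.

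For the base case, I would verify that $k\C e_0$ is injective by the reduction to finite full subcategories alluded to in the excerpt. The module $k\C e_0$ takes the value $k\C(0,m)$ at each $m$, which is one-dimensional over $k$ since there is a unique morphism $0 \to m$ in both $\FI_G$ and $\VI$; on each finite truncation of $\C$ its injectivity reduces to the semisimplicity of the relevant finite group algebras in characteristic zero (the wreath products $S_n \wr G$ for $\FI_G$, and $GL_n(\F)$ for $\VI$), and local Noetherianness then assembles the truncations. This is the one step where I expect to need genuine care, since the inductive step and the final passage to arbitrary projectives are essentially forced by the theorems we may cite.

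For the inductive step, assume $k\C e_m$ is injective. In the $\FI_G$ case, Theorem \ref{main result on q 1} gives $\q(k\C e_m) \cong k\C e_m \oplus k\C e_{m+1}$, which is injective by the first ingredient, so its summand $k\C e_{m+1}$ is injective. In the $\VI$ case, characteristic zero makes $q$ a unit of $k$, and Theorem \ref{main result on q 2} realizes $k\C e_{m+1}$ as a direct summand of the injective module $\q(k\C e_m)$, yielding the same conclusion. Once $k\C e_n$ is known to be injective for every $n$, any finitely generated projective $\C$-module $V$ is a direct summand of a finite direct sum $\bigoplus_i k\C e_{n_i}$; a finite direct sum of injectives is injective and direct summands of injectives are injective, so $V$ is injective as required.
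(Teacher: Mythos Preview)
Your proposal follows essentially the same approach as the paper: reduce to showing each $k\C e_n$ is injective, establish the base case $n=0$ via reduction to the finite subcategories $\C_n$, and carry out the inductive step by combining the Eckmann--Shapiro identity (Lemma~\ref{eckmann-shapiro2}) with Theorems~\ref{main result on q 1} and~\ref{main result on q 2}. One refinement: the injectivity of $k\C_n e_0$ does not reduce merely to the semisimplicity of the endomorphism group algebras; the paper's argument (Lemma~\ref{kC_n e_0 is injective}) uses the transitivity of the $G_{m+1}$-action on $\C(m,m+1)$ together with a dimension count to force $\Ext^1_{\C_n}(kG_m, k\C_n e_0)=0$, so you should expect to invoke that structural property rather than semisimplicity alone.
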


We shall show, in Section \ref{inj}, that if $k$ is a field of positive characteristic and $\C$ is $\FI$, the projective $\C$-module $k\C e_0$ is not injective.

\begin{corollary} \label{infinite global dimension}
Suppose that $k$ is a field of characteristic 0, and $\C$ is the category $\FI_G$ or $\VI$. Let $V$ be a finitely generated $\C$-module. Then $V$ has a finite projective resolution in the category $\C\module$ if and only if $V$ is a projective $\C$-module.
\end{corollary}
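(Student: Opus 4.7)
The plan is to prove the nontrivial direction (finite projective resolution implies projective) by induction on the length of the shortest projective resolution, with the key mechanism being that Theorem \ref{main-result-1} promotes finitely generated projectives to injectives, so that short exact sequences with a finitely generated projective kernel must split.

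First I would dispose of the easy direction: if $V$ is projective, then the one-term complex $V\to V$ is a projective resolution of length $0$, which is finite. So the content is in the converse.

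For the converse, suppose $V$ admits a finite projective resolution in $\C\module$,
\begin{equation*}
0 \longrightarrow P_n \longrightarrow P_{n-1} \longrightarrow \cdots \longrightarrow P_1 \longrightarrow P_0 \longrightarrow V \longrightarrow 0,
\end{equation*}
with each $P_i$ a finitely generated projective $\C$-module. I induct on $n$. The base case $n=0$ gives $V\cong P_0$. For the inductive step with $n\geq 1$, set $K = \Ker(P_0 \to V)$. Since $\C$ is locally Noetherian over a field of characteristic $0$ (recalled in the excerpt) and $P_0$ is finitely generated, $K$ is also a finitely generated $\C$-module; moreover the truncated complex
\begin{equation*}
0 \longrightarrow P_n \longrightarrow \cdots \longrightarrow P_1 \longrightarrow K \longrightarrow 0
\end{equation*}
is a projective resolution of $K$ in $\C\module$ of length $n-1$. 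By the induction hypothesis, $K$ is projective.

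Now comes the one genuinely useful input: by Theorem \ref{main-result-1}, the finitely generated projective module $K$ is also injective. Therefore the short exact sequence $0\to K \to P_0 \to V \to 0$ splits, yielding $P_0 \cong K\oplus V$, so $V$ is a direct summand of the projective module $P_0$ and hence projective. The main obstacle to watch is the finite generation of $K$, which is why one must work inside $\C\module$ and invoke the local Noetherian property of $\C$ over a characteristic zero field; once that is in hand, Theorem \ref{main-result-1} does all the heavy lifting and the induction collapses immediately.
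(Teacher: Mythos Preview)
Your proof is correct and follows essentially the same approach as the paper. The paper breaks the resolution into short exact sequences $0 \to Q^{i-1} \to P^i \to Q^i \to 0$ and argues inductively from the bottom ($Q^{-r}=P^{-r}$ projective $\Rightarrow$ injective $\Rightarrow$ split $\Rightarrow$ $Q^{-(r-1)}$ projective, etc.), while you organize the same idea as induction on the length of the resolution by looking at $K=\Ker(P_0\to V)$; in both cases the single substantive input is Theorem~\ref{main-result-1}.
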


If $F$ is a nonzero finitely generated torsion-free $\C$-module, then there is a smallest $a\in \Ob(\C)$ such that $F(a)\neq 0$, and it is easy to see that $\s^a(F)$ contains $k\C e_0$ as a $\C$-submodule. Since $k\C e_0$ is injective, it is a direct summand of $\s^a(F)$. By the adjunction of $\s$ and $\q$, it follows that there exists a nonzero homomorphism from $F$ to $\q^a (k\C e_0)$. Thus, when $\C$ is $\FI_G$, it follows from Theorem \ref{main result on q 1} that every nonzero finitely generated torsion-free $\C$-module has a nonzero homomorphism to a finitely generated projective $\C$-module. From this, it is not difficult to deduce the following result.

\begin{theorem} \label{main-result-2}
Suppose that $k$ is a field of characteristic 0, and $\C$ is the category $\FI_G$. Then one has the following.

(i) Any finitely generated injective $\C$-module is a direct sum of a finite dimensional injective $\C$-module and a finitely generated projective $\C$-module.

(ii) Any finitely generated $\C$-module has a finite injective resolution in the category of finitely generated $\C$-modules.
\end{theorem}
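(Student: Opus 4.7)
My plan is to deduce Theorem~\ref{main-result-2} from the fact highlighted just before its statement: every nonzero finitely generated torsion-free $\C$-module admits a nonzero homomorphism to a finitely generated projective $\C$-module. I handle the two parts in turn.

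For (i), let $V$ be a finitely generated injective $\C$-module. First I would peel off the torsion. The torsion submodule $T \subseteq V$ is finitely generated (by local Noetherianity in characteristic zero) and has bounded support, so it is finite-dimensional; its injective hull realized inside $V$ remains finite-dimensional (standard for $\FI_G$-modules over a characteristic-zero field) and splits off as an injective direct summand, yielding $V = J \oplus V'$ with $J$ finite-dimensional injective and $V'$ finitely generated, torsion-free, and injective. It remains to show that any such $V'$ is projective. I would embed $V'$ into a finitely generated projective; since $V'$ is injective, this embedding splits, realizing $V'$ as a direct summand of a projective and hence projective itself. The key observation is that the common kernel of all homomorphisms $V' \to Q$ with $Q$ finitely generated projective is zero: if this common kernel $K$ were nonzero, then since $K \subseteq V'$ is finitely generated and torsion-free, the highlighted fact would give a nonzero map $K \to Q_0$, which by injectivity of $Q_0$ (Theorem~\ref{main-result-1}) extends to a map $V' \to Q_0$, contradicting $K$ lying in every kernel. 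A finitization argument then produces a finite collection of homomorphisms $V' \to Q_i$ with zero total kernel, giving the desired embedding $V' \hookrightarrow \bigoplus_i Q_i$.

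For (ii), let $V$ be a finitely generated $\C$-module. Combining an embedding of $T(V)$ into its finite-dimensional injective hull with an embedding of $V/T(V)$ into a finitely generated projective (produced as in part (i), which is injective by Theorem~\ref{main-result-1}) yields an embedding $V \hookrightarrow I^0$ into a finitely generated injective. Setting $V^1 := I^0/V$, again finitely generated, and iterating builds a coresolution $0 \to V \to I^0 \to I^1 \to \cdots$ in $\C\module$. To force termination I would track a regularity-type invariant (such as Castelnuovo--Mumford regularity, known to be finite for finitely generated $\FI_G$-modules in characteristic zero) and argue that it strictly decreases at each stage, bounding the length of the resolution.

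The main obstacle is the finitization step at the end of (i): promoting the statement ``the common kernel of all maps $V' \to$ projective is zero'' to a single finite family with zero total kernel. Since $V'$ is typically infinite-dimensional, level-by-level finitization does not suffice directly, as submodules of $V'$ can be supported entirely in high degrees; the argument must subtly combine the finite generation of $V'$ with Noetherianity, and likely uses an ancillary structural input about finitely generated torsion-free $\FI_G$-modules in characteristic zero. Establishing termination in (ii) via a regularity bound is the secondary technical point, requiring input on $\FI_G$-module regularity not established in this paper.
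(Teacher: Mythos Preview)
Your overall strategy for both parts is sound and closely parallels the paper's, but you have correctly located the two genuine gaps, and the paper fills them with arguments you have not found.

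For (i), the finitization step is exactly where the paper does real work. The paper introduces the invariant
\[
\kappa(F) \;=\; \sum_{n\in\Ob(\C)} \dim_k \Hom_\C(F, k\C e_n),
\]
which is \emph{finite} because $F$ is generated in bounded degrees. One then argues by induction on $\kappa(F)$: choose any nonzero $f:F\to k\C e_m$, set $E=\Ker f$ and $W=\im f$; since each $k\C e_n$ is injective (Theorem~\ref{main-result-1}), the short exact sequence $0\to E\to F\to W\to 0$ gives $\kappa(E)+\kappa(W)=\kappa(F)$, and $\kappa(W,m)>0$ forces $\kappa(E)<\kappa(F)$. By induction $E$ embeds in a finite sum of $k\C e_{n_i}$'s, and one assembles an embedding of $F$ into $k\C e_{n_1}\oplus\cdots\oplus k\C e_{n_r}\oplus k\C e_m$ via the injectivity of the target. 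This completely replaces your ``common kernel plus finitization'' step and requires no auxiliary structural input. (A minor point: rather than taking an injective hull of $T$ inside $V$, the paper shows directly that $T$ itself is injective, by reducing to $\C_n$ via Lemma~\ref{reduction to C_n} and using that $\Hom_\C(U,F)=0$ for finite-dimensional $U$.)

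For (ii), the paper's termination mechanism is not regularity but induction on the \emph{generation degree} $l$ of $V$. After stripping projective summands, one embeds the torsion-free quotient $F$ into $\bigoplus_i k\C e_{n_i}$ as in (i) and observes that one may take all $n_i\leqslant l-1$: any map $F\to k\C e_l$ has image generated in degree $\leqslant l$, hence equal to a projective summand of $k\C e_l$, hence zero since $F$ has no projective summands. The cokernel is then generated in degrees $\leqslant l-1$, and the induction closes (with the torsion part handled by Lemma~\ref{inj res of finite dim module} and the horseshoe lemma). This is entirely self-contained, whereas your regularity approach would import a result not proved here.
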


The proofs of Theorems \ref{main-result-1} and \ref{main-result-2} will be given in Section \ref{applications of coinduction}. We do not know if Theorem \ref{main-result-2} holds for the category $\VI$.

\begin{remark}
Theorems \ref{main-result-1} and \ref{main-result-2} were first proved for the category $\FI$ by Sam and Snowden in \cite{SS-GL}. Our proofs are new and independent of their results.
\end{remark}

\begin{remark}
In \cite{Kuhn}, Kuhn studied the category of $\C$-modules where $\C$ is the category of finite dimensional $\F$-vector spaces with all linear maps as morphisms. He showed that if $k$ is a field and $q$ is a unit in $k$, the category of $\C$-modules is equivalent to the product over all $n\in \Z_+$ of the categories of $\mathrm{GL}_n(\F)$-modules; see \cite[Theorem 1.1]{Kuhn}. As a consequence, if $k$ is a field of characteristic 0, the category of $\C$-modules is semisimple; see \cite[Corollary 1.3]{Kuhn}. In contrast, the categories $\FI_G\Module$ and $\VI\Module$ do not have a similar decomposition, and Corollary \ref{infinite global dimension} implies that the categories $\FI_G\module$ and $\VI\module$ have infinite global dimension when characteristic of $k$ is 0.
\end{remark}

\subsection{Representation stability} \label{rep stability}
An upshot of Theorem \ref{main-result-2} is an alternative proof of a key theorem of  Church, Ellenberg, and Farb in their theory of representation stability. We shall discuss this in the more general situation of wreath product groups.

Suppose that $k$ is a splitting field for $G$ of characteristic 0. Denote by $\irr(G)$ the set of isomorphism classes of simple $kG$-modules, and $\chi_1 \in \irr(G)$ the trivial class. For each $n\in \Z_+$, the isomorphism classes of simple $k G\wr S_n$-modules are parametrized by partition-valued functions $\la$ on $\irr(G)$ such that $|\la|=n$, where
\begin{equation*}
|\la| = \sum_{\chi\in\irr(G)} |\la(\chi)|.
\end{equation*}

Suppose $\la$ is any partition-valued function on $\irr(G)$ and $n$ is any integer $\geqslant |\la|+a$, where $a$ is the biggest part of $\la(\chi_1)$. Following \cite{SS-G-maps},  we define the partition-valued function $\la[n]$ on $\irr(G)$ by
\begin{equation*}
\la[n](\chi) = \left\{ \begin{array}{ll}
(n-|\la|, \la(\chi_1)) & \mbox{ if } \chi=\chi_1,\\
\la(\chi) & \mbox{ if } \chi\neq \chi_1.
\end{array} \right.
\end{equation*}
Let $L(\la)_n$ be a simple $k G\wr S_n$-module belonging to the isomorphism class corresponding to $\la[n]$.

Suppose $V_n$ is a sequence of $k G\wr S_n$-modules equipped with linear maps $\phi_n : V_n \to V_{n+1}$. We say that $\{V_n\}$ is a \emph{consistent} sequence if the following diagram commutes for each $g\in G \wr S_n$:
\begin{equation*}
\xymatrix{ V_n \ar[r]^{\phi_n} \ar[d]_{g} & V_{n+1} \ar[d]^{g} \\ V_n \ar[r]_{\phi_n} & V_{n+1} }
\end{equation*}
Here, $g$ acts on $V_{n+1}$ by its image under the standard inclusion $G\wr S_n \hookrightarrow G\wr S_{n+1}$.

\begin{definition}
A consistent sequence $\{V_n\}$ of $kG\wr S_n$-modules is \emph{representation stable} if there exists $N>0$ such that for each $n\geqslant N$, the following three conditions hold:
\begin{itemize}
\item[(RS1)]
{\bf Injectivity:} The map $\phi_n : V_n \longrightarrow V_{n+1}$ is injective.

\item[(RS2)]
{\bf Surjectivity:} The span of the $G\wr S_{n+1}$-orbit of $\phi_n(V_n)$ is all of $V_{n+1}$.

\item[(RS3)]
{\bf Multiplicities:}
There is a decomposition
\begin{equation*}
V_n  = \bigoplus_{\la} L(\la)_n ^{\oplus c(\la)}
\end{equation*}
where the multiplicities $0\leqslant c(\la) \leqslant \infty$ do not depend on $n$.
\end{itemize}
\end{definition}

\begin{remark}
Our terminology of representation stable follows \cite{Farb}. In \cite{CF} and \cite{CEF}, this is called \emph{uniformly} representation stable.
\end{remark}

Suppose $\C$ is $\FI_G$. Let $(\iota_n, c_n)\in \Hom_\C (n,n+1)$ be the morphism where $\iota_n : [n] \hookrightarrow [n+1]$ is the standard inclusion and $c_n:[n] \to G$ is the constant map whose image is the identity element of $G$. If $V$ is a $\C$-module, then $\{ V(n) \}$ is a consistent sequence of $kG \wr S_n$-modules where the maps $\phi_n : V(n)\to V(n+1)$ are induced by the morphisms $(\iota_n, c_n)$. The following theorem was first proved in \cite[Theorem 1.13]{CEF} when $G$ is trivial.

\begin{theorem}[Finite generation vs. representation stability] \label{rep stable thm}
Suppose that $k$ is a splitting field for $G$ of characteristic 0. Let $V$ be a $\FI_G$-module. Then $V$ is finitely generated if and only if $\{V(n)\}$ is a representation stable sequence of $k G\wr S_n$-modules with $\dim_k V(n)<\infty$ for each $n$.
\end{theorem}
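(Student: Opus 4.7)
The plan is to deduce Theorem~\ref{rep stable thm} from Theorem~\ref{main-result-2} combined with an explicit irreducible decomposition of the projectives $k\C e_m$. The ``if'' direction is short: if $\{V(n)\}$ is representation stable with $\dim_k V(n)<\infty$ for all $n$, then for $n\geqslant N$ condition (RS2) says that $V(n+1)$ is spanned over $k$ by the $G\wr S_{n+1}$-translates of $\phi_n(V(n))$. Since $\phi_n$ is the action of $(\iota_n,c_n)\in\FI_G(n,n+1)$, this says exactly that the $\FI_G$-submodule of $V$ generated by $V(N)$ contains every $V(n)$ with $n\geqslant N$; together with the finite dimensionality of $\bigoplus_{n\leqslant N} V(n)$, this forces $V$ to be finitely generated.

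For the ``only if'' direction, finite generation of $V$ already forces $\dim_k V(n)<\infty$, since $V$ is a quotient of a finite direct sum of representables $k\C e_{m_i}$ and each $k\FI_G(m_i,n)$ is a finite set. I would then apply Theorem~\ref{main-result-2}(ii) to obtain a finite exact sequence $0\to V\to I^0\to\cdots\to I^r\to 0$ with each $I^j$ a finitely generated injective, and Theorem~\ref{main-result-2}(i) to decompose $I^j=P^j\oplus F^j$ with $P^j$ finitely generated projective and $F^j$ finite dimensional. The summands $F^j$ are trivially representation stable because $F^j(n)=0$ for $n\gg 0$.

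The key positive computation is representation stability of the projectives, which reduces to the case $P=k\C e_m$. For this I would identify $k\C e_m(n)=k\FI_G(m,n)$, as a $k[G\wr S_n]$-module, with an induced module from a Young-type subgroup $(G\wr S_m)\times(G\wr S_{n-m})\hookrightarrow G\wr S_n$: the first factor carries the ``colour'' data $c\colon[m]\to G$ and permutes the image of the standard injection, while the second factor acts trivially on the distinguished generator. A wreath-product Pieri rule, phrased in the language of partition-valued functions, then decomposes $k\C e_m(n)$ into simples $L(\la)_n$; the combinatorics of the padding $\la\mapsto\la[n]$ shows that once $n$ is large relative to $m$ the multiplicities are independent of $n$, yielding (RS3). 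Conditions (RS1) and (RS2) for $k\C e_m$ are immediate from the induced-module description.

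To pass from the $I^j$ to $V$, I would chop the injective resolution into short exact sequences $0\to Z^j\to I^j\to Z^{j+1}\to 0$ with $Z^0=V$ and $Z^{r+1}=0$, then induct downward starting from $Z^{r+1}=0$. Each step requires that representation stability is preserved by taking kernels of morphisms between consistent sequences of finite-dimensional $k[G\wr S_n]$-modules. Semisimplicity of $k[G\wr S_n]$ in characteristic $0$ delivers (RS3) by additivity of multiplicities, while (RS2) for a kernel is inherited since the generation-by-$\FI_G$-morphisms property is read off from the action of $(\iota_n,c_n)$. The main obstacle is the preservation of (RS1), the injectivity of $\phi_n$ for large $n$; this amounts to showing that the torsion of a finitely generated $\FI_G$-module is concentrated in bounded degrees, a Noetherian-type statement that should be extractable from the finiteness of the injective resolution together with the explicit stable form of the projective values computed in the previous step.
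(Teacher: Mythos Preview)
Your overall strategy---pass to a finite injective resolution via Theorem~\ref{main-result-2}, decompose each injective as projective plus finite-dimensional, and check (RS3) for $k\C e_m$ by a wreath-product Pieri computation---is exactly the paper's approach, and your short-exact-sequence induction for (RS3) is a correct way to fill in the reduction that the paper leaves implicit.

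However, in the last paragraph you have the relative difficulty of (RS1) and (RS2) backwards. In the short exact sequence $0\to Z^j\to I^j\to Z^{j+1}\to 0$ the module $Z^j$ is a \emph{submodule} of $I^j$, so (RS1) passes down for free: $\phi_n$ on $Z^j(n)$ is the restriction of $\phi_n$ on $I^j(n)$, and a restriction of an injection is injective. There is no obstacle and no need to invoke bounded torsion. By contrast, your claim that ``(RS2) for a kernel is inherited'' is false as stated: knowing that $I^j(n+1)$ is spanned by the $G_{n+1}$-orbit of $\phi_n(I^j(n))$ gives no control over whether the same holds inside the submodule $Z^j$. What actually delivers (RS2) for $Z^j$ is local Noetherianity of $\FI_G$: $Z^j$ is a submodule of the finitely generated module $I^j$, hence finitely generated, and (RS2) is equivalent to finite generation (as you correctly observe in your first paragraph).

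The paper avoids this tangle by not routing (RS1) or (RS2) through the resolution at all: (RS2) is just finite generation, and (RS1) follows directly from Noetherianity since the torsion submodule of $V$ is finitely generated, hence supported in bounded degree. Only (RS3) is reduced to the projective case via Theorem~\ref{main-result-2}.
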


It is easy to see that $V$ is finitely generated if and only if condition (RS2) holds and $\dim_k V(n)< \infty$ for each $n$; see \cite[Proposition 5.2]{GL}. Suppose that $V$ is a finitely generated $\C$-module. Then condition (RS1) is a simple consequence of the fact that $V$ is Noetherian; see \cite[Proposition 5.1]{GL}. The real task is to prove that condition (RS3) holds. But by Theorem \ref{main-result-2}, it suffices to verify condition (RS3) for finitely generated projective $\C$-modules, and this is easily accomplished by Pieri's formula. We shall give the details of the proof of Theorem \ref{rep stable thm} in Section \ref{last section}.

\subsection{Bibliographical remarks}
Church and Farb introduced the notion of representation stability for various families of groups in \cite{CF}. The connection of representation stability for sequences of $S_n$-representations to $\FI$-modules was subsequently made by Church, Ellenberg, and Farb in \cite{CEF}; their paper contains many interesting examples of representation stable sequences in algebra, geometry and topology. It was also proved in \cite[Theorem 1.3]{CEF} that $\FI$ is locally Noetherian over any field $k$ of characteristic 0; this result was later extended to an arbitrary Noetherian ring $k$ by  Church, Ellenberg, Farb, and Nagpal in \cite[Theorem A]{CEFN}.

In \cite{Wilson}, Wilson defined and studied $\FI_{\mathcal{W}}$-modules associated to the Weyl groups of type B/C and type D. In particular, she gave a proof of the analogue of Theorem \ref{rep stable thm} in these cases along the same lines as the proof in \cite{CEF}. Using \cite[Theorem A]{CEFN}, she proved that $\FI_{\mathcal{W}}$ is locally Noetherian over any Noetherian ring $k$. In the type B/C case, the category $\FI_{\mathcal{W}}$ is same as the category $\FI_G$ for the group $G$ of order 2.

From a different point of view, Snowden \cite[Theorem 2.3]{Snowden} also proved that $\FI$ is locally Noetherian over any field $k$ of characteristic 0. In fact, he proved this for any twisted commutative algebra finitely generated in order 1. Theorems \ref{main-result-1} and  \ref{main-result-2} for the category $\FI$ were proved by Sam and Snowden in \cite[Corollary 4.2.5]{SS-GL}, \cite[Theorem 4.3.1]{SS-GL}, and \cite[Theorem 4.3.4]{SS-GL}. The strategy of their proofs is to pass to the Serre quotient of the category of finitely generated $\FI$-modules by the subcategory of finite dimensional $\FI$-modules.
Using their results, Sam and Snowden deduced a formula for the character polynomial of a finitely generated $\FI$-module over a field of characteristic 0.

In \cite{GL}, we gave a simple proof that $\FI_G$ and $\VI$ are locally Noetherian over any field $k$ of characteristic 0 (which is sufficient for the present paper); a similar proof for $\FI$ and $\VI$ was also obtained by Putman (unpublished). At about the same time, using Gr\"{o}bner basis methods, Putman and Sam \cite[Theorem A]{PS} proved that $\VI$ is locally Noetherian when $\F$ is any finite ring and $k$ is any Noetherian ring, and Sam and Snowden \cite[Theorem 10.1.2]{SS-Grobner} proved that $\FI_G$ is locally Noetherian for any Noetherian ring $k$. In \cite[Theorem 1.2.4]{SS-G-maps}, Sam and Snowden proved that if $k$ is a field in which the order of $G$ is invertible, then representations of $\FI_G$ are in fact equivalent to representations of $\FI \times \mathrm{FB}^{a}$, where $\mathrm{FB}$ is the groupoid of finite sets and $a$ is the number of non-trivial irreducible representations of $G$. Using this, they proved a wreath-product version of Murnaghan's stability theorem \cite[Theorem 5.2.1]{SS-G-maps}.

It was first observed by Church, Ellenberg, Farb, and Nagpal in \cite[Proposition 2.12]{CEFN} that the
functor $\s$ for the category $\FI$ has the property that $\s(V)$ is projective whenever $V$ is a finitely generated projective $\FI$-module. A more precise version of this property plays a crucial role in their paper. In \cite[Section 5]{GL-Koszulity}, we showed that the functor $\s$ for many other categories have this property, including $\FI_G$ and $\VI$. To the best of our knowledge, no one has studied the coinduction functor $\q$, even in the case of $\FI$.

\section{Preliminaries} \label{preliminaries}

\subsection{Notations and terminology} \label{notations and terminology}
Recall that an EI category is a category in which every endomorphism is an isomorphism (see \cite{Dieck}). Throughout this paper, we shall denote by $\C$ an EI category satisfying the following conditions:

\begin{itemize}
\item
$\Ob(\C)=\Z_+$;

\item
$\C(m,n)$ is an empty set if $m>n$;

\item
$\C(m,n)$ is a nonempty finite set if $m\leqslant n$;

\item
for $m\leqslant l\leqslant n$, the composition map $\C(l,n) \times \C(m,l) \to \C(m,n)$ is surjective.
\end{itemize}

Suppose $n\in \Ob(\C)$. We shall use the following notations:
\begin{itemize}
\item $G_n$ denotes the group $\C(n,n)$;

\item $\C_n$ denotes the full subcategory of $\C$ with $\Ob(\C_n)=\{0, 1, \ldots, n\}$.
\end{itemize}

Suppose that $k$ is a commutative ring and $V$ is a $\C$-module. For $n\in \Ob(\C)$ and $v\in V(n)$, we say that $v$ has degree $n$, and write $\deg(v)$ for the degree of $v$. By a set of generators of $V$, we shall always mean a subset $S$ of $\bigcup_{n\in \Ob(\C)} V(n)$ such that the only submodule of $V$ containing $S$ is $V$ itself. We say that $V$ is generated in degrees $\leqslant n$ if $V$ has a set $S$ of generators such that the degree of each element of $S$ is at most $n$. We say that $V$ is finitely generated if $V$ has a finite set of generators.
We say that $V$ is Noetherian if every submodule of $V$ is finitely generated. We say that $\C$ is locally Noetherian if every finitely generated $\C$-module is Noetherian.

\subsection{Baer's criterion}
We omit the proof of the following lemma, which is standard  (see \cite[page 39]{Weibel}).

\begin{lemma}[Baer's criterion]
Suppose that $k$ is a commutative ring, and $V\in \C\Module$. Suppose that for all $n\in \Ob(\C)$ and for all $\C$-submodule $U$ of $k\C e_n$, every homomorphism $U \to V$ can be extended to a homomorphism $k\C e_n \to V$. Then $V$ is injective in $\C\Module$.
\end{lemma}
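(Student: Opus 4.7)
The plan is to mimic the classical proof of Baer's criterion, with the Yoneda lemma replacing the role played by the unit element in a ring: a homomorphism $k\C e_n \to W$ of $\C$-modules is determined by the image of $e_n$, and any element of $W(n)$ arises this way. So submodules of $k\C e_n$ play the role of left ideals, and the hypothesis is precisely the naive analogue of the usual criterion.

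First I would set up a Zorn's lemma argument. Given any inclusion $A \hookrightarrow B$ of $\C$-modules together with a homomorphism $f : A \to V$, consider the poset $\Sigma$ of pairs $(A', g)$ where $A \subseteq A' \subseteq B$ is a $\C$-submodule and $g : A' \to V$ restricts to $f$ on $A$, ordered by extension. The pair $(A, f)$ lies in $\Sigma$, and the union of a chain gives an upper bound in $\Sigma$, so Zorn's lemma produces a maximal element $(M, g)$.

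The heart of the matter is to rule out $M \neq B$. Supposing it, I would choose $n \in \Ob(\C)$ and $b \in B(n)$ with $b \notin M(n)$. By Yoneda, $b$ corresponds to a homomorphism $\phi : k\C e_n \to B$ sending $e_n$ to $b$; explicitly, $\phi$ acts as $\alpha \mapsto \alpha \cdot b$ on basis elements of $k\C e_n$. Set $U := \phi^{-1}(M)$, a $\C$-submodule of $k\C e_n$. Then $g \circ \phi|_U : U \to V$ is a homomorphism which, by the hypothesis of the lemma, extends to some $\psi : k\C e_n \to V$. Letting $M' := M + \phi(k\C e_n)$, I would define $g' : M' \to V$ by the formula $g'(x + \phi(\alpha)) := g(x) + \psi(\alpha)$. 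Since $b = \phi(e_n) \in M' \setminus M$, the strict containment $M \subsetneq M'$ will contradict the maximality of $(M, g)$, forcing $M = B$ and finishing the proof.

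The only nontrivial check, and where I expect the main obstacle to lie, is the well-definedness of $g'$. Suppose $x + \phi(\alpha) = x' + \phi(\alpha')$ with $x, x' \in M$. Then $\phi(\alpha - \alpha') = x' - x \in M$, so $\alpha - \alpha' \in U$ by definition of $U$, and hence $\psi(\alpha - \alpha') = g(\phi(\alpha - \alpha')) = g(x' - x)$, which rearranges to $g(x) + \psi(\alpha) = g(x') + \psi(\alpha')$. This is precisely the point where the extension hypothesis on submodules of $k\C e_n$ is consumed; it is the classical calculation transplanted into the categorical context, so no genuinely new ingredient is required beyond Yoneda and Zorn.
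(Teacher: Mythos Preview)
Your argument is correct and is precisely the standard proof of Baer's criterion (Zorn's lemma plus the pullback-of-an-ideal trick) adapted to $\C$-modules via Yoneda. The paper itself omits the proof entirely, remarking that it is standard and citing \cite[page 39]{Weibel}; your write-up is essentially the proof one finds there, so there is no alternative approach to compare against.
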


The following corollary is immediate.

\begin{corollary} \label{injectivity in C-mod}
Suppose that $k$ is a commutative ring and $\C$ is locally Noetherian. Let $V\in \C\module$. Then $V$ is injective in $\C\Module$ if and only if $V$ is injective in $\C\module$.
\end{corollary}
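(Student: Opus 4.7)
The plan is to deduce this directly from Baer's criterion, using local Noetherianity to bridge between the two module categories.

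The forward implication is essentially formal. If $V$ is injective in $\C\Module$ and $0 \to U \to W$ is a monomorphism in $\C\module$, then it is also a monomorphism in $\C\Module$, and since $\C\module$ is a full subcategory of $\C\Module$, every homomorphism $U \to V$ in $\C\module$ is a homomorphism in $\C\Module$ and hence extends to $W \to V$ in $\C\Module$, which is automatically a morphism in $\C\module$.

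For the reverse implication, I would invoke Baer's criterion. Fix any $n \in \Ob(\C)$ and any $\C$-submodule $U$ of $k\C e_n$. Since $k\C e_n$ is finitely generated (by $e_n$) and $\C$ is locally Noetherian, $U$ is finitely generated, so $U \in \C\module$. Likewise $k\C e_n \in \C\module$. Given any homomorphism $\phi : U \to V$ in $\C\Module$, fullness of $\C\module$ in $\C\Module$ says $\phi$ is a morphism in $\C\module$; by the assumption that $V$ is injective in $\C\module$, applied to the inclusion $U \hookrightarrow k\C e_n$ in $\C\module$, we can extend $\phi$ to a homomorphism $k\C e_n \to V$. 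Baer's criterion then yields injectivity of $V$ in $\C\Module$.

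There is no real obstacle here; the only point worth verifying is that local Noetherianity of $\C$ guarantees that submodules of the finitely generated module $k\C e_n$ lie in $\C\module$, so that the injectivity hypothesis in $\C\module$ may be applied. Everything else is routine use of the fact that $\C\module \subseteq \C\Module$ is a full subcategory closed under the relevant Homs and extensions of morphisms.
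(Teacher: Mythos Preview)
Your proposal is correct and follows exactly the approach the paper intends: the corollary is stated as an immediate consequence of Baer's criterion, and your argument spells out precisely why, using local Noetherianity to ensure that every submodule of $k\C e_n$ lies in $\C\module$.
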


\subsection{Projective resolutions}
Suppose that $k$ is a commutative ring. For any $n\in \Ob(\C)$, the $\C$-module $k\C e_n$ is clearly projective. Suppose $V$ is a $\C$-module and $S$ is a set of generators of $V$. Then there is a surjective homomorphism $\bigoplus_{s\in S} k\C e_{\deg(s)} \to V$ whose restriction to the direct summand corresponding to $s$ is $\alpha\mapsto \alpha s$.  Hence, any $\C$-module $V$ has a projective resolution
\begin{equation*}
\cdots \to P^{-2}  \to P^{-1} \to P^{0} \to V \to 0
\end{equation*}
such that each $P^{-i}$ is a direct sum of projective $\C$-modules of the form $k\C e_n$ where $n\in \Ob(\C)$.

\subsection{Restriction to $\C_n$}
Suppose that $k$ is a commutative ring. Let $n\in \Ob(\C)$. Recall that $\C_n$ denotes the full subcategory of $\C$ with $\Ob(\C_n)=\{0, 1, \ldots, n\}$; see Subsection \ref{notations and terminology}.
Denote by $\jmath: \C_n \hookrightarrow \C$ the inclusion functor. We have the pullback functor
\begin{equation*}
 \jmath^*: \C\Module \longrightarrow \C_n\Module, \quad  V \mapsto V\circ \jmath.
\end{equation*}
We also have the pushforward functor
\begin{equation*}
\jmath_*: \C_n\Module \longrightarrow \C\Module
\end{equation*}
which regards a $\C_n$-module as a $\C$-module in the obvious way.
The pushforward functor $\jmath_*$ is a right adjoint functor to the pullback functor $\jmath^*$.

\begin{lemma} \label{eckmann-shapiro1}
Suppose that $k$ is a commutative ring. Let $n\in\Ob(\C)$ and denote by $\jmath: \C_n \hookrightarrow \C$ the inclusion functor. For any $V\in \C\Module$ and $W\in \C_n\Module$, one has
\begin{equation*}
\Ext_{\C_n}^i(\jmath^*(V), W) = \Ext_{\C}^i(V,\jmath_*(W))
\end{equation*}
for all $i\geqslant 1$.
\end{lemma}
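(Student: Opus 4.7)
The plan is to run the standard Eckmann--Shapiro argument built on the adjoint pair $(\jmath^*, \jmath_*)$. The key observation is that both functors in this pair are exact: $\jmath^*$ is exact because a short exact sequence of $\C$-modules is exact if and only if it is exact at each object, and restricting to the objects $\{0,1,\ldots,n\}$ cannot destroy this; $\jmath_*$ is exact for the same reason, since extension by zero on the objects $>n$ does not affect exactness at objects $\leqslant n$.

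Since $\jmath^*$ is exact and is the left adjoint of $\jmath_*$, a formal argument (using the natural isomorphism $\Hom_\C(-,\jmath_*I) \cong \Hom_{\C_n}(\jmath^*(-), I)$ together with exactness of $\jmath^*$) shows that $\jmath_*$ sends injective $\C_n$-modules to injective $\C$-modules. This is the one non-tautological ingredient; I would record it as a short preliminary observation.

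With that in hand, the computation is routine. Choose an injective resolution
\begin{equation*}
0 \to W \to I^0 \to I^1 \to I^2 \to \cdots
\end{equation*}
in $\C_n\Module$. Applying the exact functor $\jmath_*$ yields a resolution of $\jmath_*W$ in $\C\Module$ whose terms $\jmath_*I^j$ are injective by the previous paragraph, hence an injective resolution. Therefore
\begin{equation*}
\Ext_\C^i(V, \jmath_*W) = H^i\bigl(\Hom_\C(V, \jmath_*I^\bullet)\bigr).
\end{equation*}
Using the adjunction isomorphism $\Hom_\C(V, \jmath_*I^j) \cong \Hom_{\C_n}(\jmath^*V, I^j)$, which is natural in $j$, the right-hand side becomes $H^i\bigl(\Hom_{\C_n}(\jmath^*V, I^\bullet)\bigr) = \Ext_{\C_n}^i(\jmath^*V, W)$, giving the desired identification.

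The main (and really the only) obstacle is verifying that $\jmath_*$ preserves injectivity; everything else is formal from the adjunction and the exactness of the two functors. I would handle this obstacle either by the abstract adjunction-plus-exactness argument noted above, or, alternatively, by applying Baer's criterion and using the explicit description of $k\C e_m$-submodules together with the fact that $\jmath_*$ is fully faithful on modules supported in degrees $\leqslant n$.
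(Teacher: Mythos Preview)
Your argument is correct, but it proceeds dually to the paper's. The paper works on the projective side: it notes that $\jmath^*(k\C e_m)$ is a projective $\C_n$-module for every $m$ (indeed it equals $k\C_n e_m$ when $m\leqslant n$ and vanishes when $m>n$), takes a projective resolution of $V$ by direct sums of the $k\C e_m$, applies the exact functor $\jmath^*$ to obtain a projective resolution of $\jmath^*V$, and then invokes the adjunction. You instead work on the injective side: you use exactness of the left adjoint $\jmath^*$ to deduce that $\jmath_*$ preserves injectives, push forward an injective resolution of $W$, and invoke the adjunction. Your route is the more general categorical formulation (any adjoint pair with exact left adjoint gives this Ext comparison) and avoids the explicit description of $\jmath^*(k\C e_m)$; the paper's route is more concrete and ties in directly with the projective resolutions discussed just before the lemma. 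Either is a perfectly good instance of Eckmann--Shapiro.
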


\begin{proof}
Observe that $\jmath^*(k\C e_m)$ is a projective $\C_n$-module for all $m\in \Ob(\C)$. Thus, the required result follows from the Eckmann-Shapiro lemma.
\end{proof}

\begin{lemma} \label{reduction to C_n}
Suppose that $k$ is a commutative ring and $\C$ is locally Noetherian. Let $V, W \in \C\module$. Then there exists $N \in \Ob(\C)$ such that for all $n\geqslant N$, one has
\begin{equation*}
\Ext_{\C}^1(V, W) = \Ext_{\C_n}^1(\jmath^*(V),\jmath^*(W)),
\end{equation*}
where $\jmath: \C_n \hookrightarrow \C$ is the inclusion functor.
\end{lemma}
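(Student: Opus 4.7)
The plan is to combine Lemma \ref{eckmann-shapiro1} with a degree-truncation argument. Applying Lemma \ref{eckmann-shapiro1} with $W$ replaced by $\jmath^*W$ gives $\Ext^1_{\C_n}(\jmath^*V,\jmath^*W)\cong\Ext^1_\C(V,\jmath_*\jmath^*W)$, so the task reduces to finding $N$ such that for every $n\geqslant N$ the canonical map $\Ext^1_\C(V,W)\to\Ext^1_\C(V,\jmath_*\jmath^*W)$ induced by the unit $W\to\jmath_*\jmath^*W$ of the adjunction $\jmath^*\dashv\jmath_*$ is an isomorphism.

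Next I would analyze this unit directly. Since $\jmath_*$ is extension by zero, the unit is the identity on $W$ in degrees $\leqslant n$ and the zero map in degrees $>n$. Its kernel $W_{>n}$ is the $\C$-submodule of $W$ that equals $W(m)$ for $m>n$ and vanishes for $m\leqslant n$; this really is a $\C$-submodule because $\C$ has no morphisms from larger degree to smaller degree. The resulting short exact sequence
$$0 \longrightarrow W_{>n} \longrightarrow W \longrightarrow \jmath_*\jmath^*W \longrightarrow 0$$
gives a long exact $\Ext$-sequence
$$\Ext^1_\C(V,W_{>n}) \to \Ext^1_\C(V,W) \to \Ext^1_\C(V,\jmath_*\jmath^*W) \to \Ext^2_\C(V,W_{>n}),$$
so it will suffice to show that $\Ext^1_\C(V,W_{>n})=\Ext^2_\C(V,W_{>n})=0$ for all sufficiently large $n$.

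For the vanishing I would invoke local Noetherianness: since $V$ is finitely generated, every syzygy of $V$ is again finitely generated, so one can construct a projective resolution $\cdots\to P^{-3}\to P^{-2}\to P^{-1}\to P^{0}\to V\to 0$ in which each $P^{-i}$ is a \emph{finite} direct sum of modules of the form $k\C e_m$. Let $N$ be any common degree bound on the generators appearing in $P^0, P^{-1}, P^{-2}, P^{-3}$. Then for every $n\geqslant N$ and every $0\leqslant i\leqslant 3$, each summand $k\C e_m$ of $P^{-i}$ satisfies $m\leqslant n$, and hence
$$\Hom_\C(k\C e_m,W_{>n})=W_{>n}(m)=0.$$
Consequently the complex computing $\Ext^\ast_\C(V,W_{>n})$ is zero in cohomological positions $0,1,2,3$, which yields the required vanishing of $\Ext^1$ and $\Ext^2$.

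The only real obstacle, which is mostly bookkeeping, is arranging that the first four terms of a projective resolution of $V$ can be chosen simultaneously finitely generated and bounded in degree; this is exactly where the hypotheses that $V$ is finitely generated and $\C$ is locally Noetherian are used. Everything else is formal manipulation of the adjunction $\jmath^*\dashv\jmath_*$ and of the long exact $\Ext$-sequence.
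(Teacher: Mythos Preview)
Your proposal is correct and follows essentially the same route as the paper: both arguments use the short exact sequence $0\to W_{>n}\to W\to \jmath_*\jmath^*W\to 0$, the resulting long exact $\Ext$-sequence, and a finitely generated projective resolution of $V$ (available by local Noetherianity) to kill the flanking $\Ext$ groups, then invoke Lemma~\ref{eckmann-shapiro1}. The only cosmetic difference is that the paper bounds the generating degrees of $P^{-1}$ and $P^{-2}$ (which already forces $\Hom_\C(P^{-i},W_{>n})=0$ for $i=1,2$ and hence $\Ext^1=\Ext^2=0$), whereas you bound $P^0$ through $P^{-3}$; your choice is harmless but slightly more than is needed.
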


\begin{proof}
Since $\C$ is locally Noetherian, there exists a projective resolution
\begin{equation*}
\cdots \to P^{-2}  \to P^{-1} \to P^{0} \to V \to 0
\end{equation*}
such that each $P^{-i}$ is a finitely generated projective $\C$-module. Thus, there exists $N\in \Ob(\C)$ such that $P^{-2}$ and $P^{-1}$ are both generated in degrees $\leqslant N$.

Suppose $n\geqslant N$. Let $U$ be the submodule $\bigoplus_{m>n} W(m)$ of $W$. We have a short exact sequence
\begin{equation*}
0 \longrightarrow U \longrightarrow W \longrightarrow \jmath_*( \jmath^*(W) )\longrightarrow 0,
\end{equation*}
and hence a long exact sequence
\begin{multline*}
\cdots \to \Ext^1_{\C} (V,U) \to \Ext^1_{\C} (V, W) \to \Ext^1_{\C} (V,  \jmath_*( \jmath^*(W) ))
\to  \Ext^2_{\C}(V,W) \to \cdots .
\end{multline*}
But, for $i=1, 2$, one has $\Hom_{\C}(P^{-i}, U)=0$ and so $\Ext^i_{\C} (V,U)=0$. It follows that
\begin{equation*}
\Ext^1_{\C} (V, W) =  \Ext^1_{\C} (V,  \jmath_*( \jmath^*(W) )) = \Ext^1_{\C_n}(\jmath^*(V), \jmath^*(W)),
\end{equation*}
using Lemma \ref{eckmann-shapiro1}.
\end{proof}

\subsection{Injective resolutions of finite dimensional modules}
Suppose that $k$ is a field. We denote by $\D$ the standard duality functor $\Hom_k ( - , k)$ between the categories $\C_n\module$ and $\C_n^{\op}\module$. Any finite dimensional injective $\C_n$-module is isomorphic to $\D(P)$ for some finite dimensional projective $\C_n^{\op}$-module $P$.

\begin{lemma} \label{finite dimensional injective 1}
Suppose $k$ is a field. If $W$ is a finite dimensional injective $\C_n$-module for some $n\in \Ob(\C)$, then $\jmath_*(W)$ is a finite dimensional injective $\C$-module, where $\jmath: \C_n\hookrightarrow \C$.
\end{lemma}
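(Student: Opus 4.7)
The plan is to combine the elementary description of $\jmath_*$ with the Eckmann-Shapiro-type result already proved in Lemma \ref{eckmann-shapiro1}.

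First, to verify finite dimensionality: by the definition of the pushforward, $\jmath_*(W)(m) = W(m)$ for $m \leqslant n$ and $\jmath_*(W)(m) = 0$ for $m > n$. Hence $\dim_k \jmath_*(W) = \dim_k W < \infty$, so $\jmath_*(W)$ is indeed finite dimensional.

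Second, to verify injectivity: the plan is to use Baer's criterion indirectly via the fact that vanishing of $\Ext^1_{\C}(V, \jmath_*(W))$ for all $V \in \C\Module$ characterizes injectivity. For any $V \in \C\Module$, Lemma \ref{eckmann-shapiro1} gives
\begin{equation*}
\Ext^1_{\C}(V, \jmath_*(W)) = \Ext^1_{\C_n}(\jmath^*(V), W).
\end{equation*}
Since $W$ is injective in $\C_n\Module$ (here $k\C_n$ is a finite dimensional $k$-algebra, so finite dimensional injectivity coincides with injectivity in $\C_n\Module$), the right-hand side vanishes. Thus $\Ext^1_{\C}(V, \jmath_*(W)) = 0$ for every $V \in \C\Module$, which means $\jmath_*(W)$ is injective in $\C\Module$.

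There is no real obstacle in this argument — it is a formal consequence of the fact that the exact functor $\jmath^*$ preserves projectives (the content of the proof of Lemma \ref{eckmann-shapiro1}), so its right adjoint $\jmath_*$ preserves injectives. The only point worth mentioning explicitly is that the hypothesis "$W$ is a finite dimensional injective $\C_n$-module" should be read as $W$ being injective in the full module category $\C_n\Module$; since $k\C_n$ is a finite dimensional $k$-algebra, this is automatic.
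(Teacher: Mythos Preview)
Your argument is correct and follows the same route as the paper: finite dimensionality is clear from the description of $\jmath_*$, and injectivity is deduced from Lemma~\ref{eckmann-shapiro1}. The paper's proof is just a two-line version of what you wrote, without the extra unpacking of why a finite dimensional injective $\C_n$-module is injective in all of $\C_n\Module$.
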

\begin{proof}
It is obvious that $\jmath_*(W)$ is finite dimensional. The injectivity of $\jmath_*(W)$ follows from Lemma \ref{eckmann-shapiro1}.
\end{proof}

\begin{lemma} \label{inj res of finite dim module}
Suppose $k$ is a field of characteristic 0. Then every finite dimensional $\C$-module $V$ has a finite injective resolution
\begin{equation*}
 0 \to V  \to I^0 \to I^1 \to \cdots I^m \to 0
\end{equation*}
where $I^r$ is finite dimensional for each $r\in\{0, 1,\ldots, m\}$.
\end{lemma}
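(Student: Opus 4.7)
The plan is to reduce the problem to a finite EI subcategory and then exploit duality together with Lemma \ref{finite dimensional injective 1}. Since $V$ is finite-dimensional, there exists $n \in \Ob(\C)$ with $V(m) = 0$ for all $m > n$; letting $\jmath: \C_n \hookrightarrow \C$ denote the inclusion, we have $V = \jmath_*(\jmath^*(V))$, and $\jmath^*(V)$ is a finite-dimensional $\C_n$-module.

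The key point is that the category algebra $k\C_n$ is a finite-dimensional $k$-algebra, and since $k$ has characteristic $0$ each of the finite endomorphism group algebras $kG_m$ (for $m \leqslant n$) is semisimple. A standard result on finite EI categories with semisimple endomorphism group algebras then implies that $k\C_n$ has finite global dimension. A self-contained induction on $n$ can be given: using the idempotent $e = e_0 + \cdots + e_{n-1} \in k\C_n$, one has $e k\C_n e \cong k\C_{n-1}$ (handled inductively) and $k\C_n / k\C_n e k\C_n \cong kG_n$ (semisimple), and finite global dimension of $k\C_n$ follows from standard results on recollements for idempotent ideals.

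Given that $k\C_n$ has finite global dimension, the finite-dimensional $\C_n^{\op}$-module $\D(\jmath^*(V))$ admits a finite projective resolution by finite-dimensional projective $\C_n^{\op}$-modules. Applying the exact contravariant duality $\D$ (and using $\D^2 = \Id$ on finite-dimensional modules), we obtain a finite injective resolution
\[ 0 \to \jmath^*(V) \to I^0 \to I^1 \to \cdots \to I^m \to 0 \]
in $\C_n\module$, where each $I^r$ is a finite-dimensional injective $\C_n$-module. Finally, applying $\jmath_*$, which is exact (extension by zero) and which sends finite-dimensional injective $\C_n$-modules to finite-dimensional injective $\C$-modules by Lemma \ref{finite dimensional injective 1}, produces the desired finite injective resolution of $V$ by finite-dimensional injective $\C$-modules.

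The main obstacle is verifying that $k\C_n$ has finite global dimension; this is essentially a classical fact about finite EI categories in the semisimple-strata setting, but writing out a self-contained inductive proof is the only substantive calculation in the argument.
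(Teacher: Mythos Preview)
Your proof is correct and follows essentially the same route as the paper: reduce to the finite subcategory $\C_n$, use duality to convert a finite projective resolution over $\C_n^{\op}$ into a finite injective resolution over $\C_n$, and push forward via $\jmath_*$ using Lemma~\ref{finite dimensional injective 1}. The only difference is that where the paper simply asserts ``it is easy to see that any finite dimensional $\C_n^{\op}$-module has a finite projective resolution,'' you supply an explicit justification via finite global dimension of the category algebra (noting implicitly that left and right global dimensions agree for finite-dimensional algebras); this fills in the paper's omitted step rather than departing from its strategy.
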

\begin{proof}
Choose $n\in\Ob(\C)$ such that $V = \jmath_*(\jmath^*(V))$ where $\jmath: \C_n \hookrightarrow \C$. It is easy to see that any finite dimensional $\C_n^{\op}$-module has a finite projective resolution in the category $\C_n^{\op}\module$. Using the functor $\D$, we deduce that $\jmath^*(V)$ has a finite injective resolution in the category $\C_n\module$.  By Lemma \ref{finite dimensional injective 1}, applying the functor $\jmath_*$ to this resolution gives a resolution of $V$ of the required form.
\end{proof}

\begin{remark}
Suppose $k$ is a field, and $V$ is a finite dimensional injective $\C$-module. It is easy to see that if $n\in \Ob(\C)$ and $V=\jmath_*(\jmath^*(V))$, where $\jmath: \C_n \hookrightarrow \C$ is the inclusion functor, then $\jmath^*(V)$ is an injective $\C_n$-module.
\end{remark}

\section{Injectivity of $k\C e_0$} \label{inj}

\subsection{Field of characteristic 0}
We say that the category $\C$ satisfies the \emph{transitivity condition} if for all $n\in \Ob(\C)$, the action of $G_{n+1}$ on $\C(n,n+1)$ is transitive. (Recall that $G_n$ denotes the group $\C(n,n)$; see Subsection \ref{notations and terminology}.)

\begin{lemma} \label{kC_n e_0 is injective}
Suppose that $k$ is a field of characteristic 0. If $\C$ satisfies the transitivity condition, and $0$ is an initial object of $\C$, then $k\C_n e_0$ is an injective $\C_n$-module for all $n\in \Ob(\C)$.
\end{lemma}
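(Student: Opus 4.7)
My plan is to establish a natural isomorphism
\[
\Hom_{\C_n}(V,\, k\C_n e_0) \;\cong\; \Hom_{kG_n}(V(n),\, k)
\]
for every $\C_n$-module $V$, with $k$ on the right denoting the trivial $kG_n$-module, and then to deduce injectivity of $k\C_n e_0$ from the resulting factorization of the functor $\Hom_{\C_n}(-,\, k\C_n e_0)$. Morally this amounts to identifying $k\C_n e_0$ with the pushforward $\imath_* k$ along the inclusion $\imath$ of the full subcategory on the single object $n$ into $\C_n$.

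The initial-object hypothesis makes $k\C_n e_0$ particularly simple: every $\C_n(0,l)$ is a singleton, so $(k\C_n e_0)(l) = k$ for every $l \leq n$ and every morphism of $\C_n$ acts as $\mathrm{id}_k$. The key preliminary claim I will need is that for each $0 \leq l \leq n$ the group $G_n$ acts transitively on $\C(l,n)$, not only for $l = n-1$ (which is the given hypothesis). I plan to prove this by induction on $n$: the case $l = n$ is the regular action; for $l < n$, given $\alpha, \beta \in \C(l,n)$, the surjectivity of composition produces factorizations $\alpha = \alpha_1 \alpha_2$ and $\beta = \beta_1 \beta_2$ with $\alpha_1, \beta_1 \in \C(n-1, n)$ and $\alpha_2, \beta_2 \in \C(l, n-1)$. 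The inductive hypothesis yields $h \in G_{n-1}$ with $h \alpha_2 = \beta_2$, and the transitivity hypothesis then yields $g \in G_n$ with $g(\alpha_1 h^{-1}) = \beta_1$, so that $g\alpha = \beta$.

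With this transitivity in hand, the natural isomorphism is straightforward. Restricting a morphism $\phi: V \to k\C_n e_0$ to degree $n$ gives a $kG_n$-linear map $\phi_n: V(n) \to k$. Conversely, given a $kG_n$-linear $\psi: V(n) \to k$, define $\phi_l: V(l) \to k$ by $\phi_l(v) = \psi(\alpha v)$ for any $\alpha \in \C(l, n)$; the transitivity claim ensures this is independent of the choice of $\alpha$, and naturality with respect to $\C_n$-morphisms is immediate because every morphism acts as $\mathrm{id}_k$ on $k\C_n e_0$. The two constructions are mutually inverse.

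Finally, the natural isomorphism realizes $\Hom_{\C_n}(-,\, k\C_n e_0)$ as the composition of the exact evaluation functor $V \mapsto V(n)$ with $\Hom_{kG_n}(-,\, k)$, the latter being exact since the trivial $kG_n$-module $k$ is injective in $kG_n\Module$ by Maschke's theorem in characteristic $0$. Hence $\Hom_{\C_n}(-, k\C_n e_0)$ is exact, which is the injectivity of $k\C_n e_0$ in $\C_n\Module$. The main technical step is the inductive proof of global transitivity of the $G_n$-action on the hom-sets $\C(l,n)$; once this is in place, the rest of the argument is formal.
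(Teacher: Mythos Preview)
Your proof is correct and takes a genuinely different route from the paper's. The paper verifies $\Ext^1_{\C_n}(kG_m, k\C_n e_0)=0$ for each $m\leqslant n$ by a dimension count: it uses the short exact sequence $0\to U\to k\C_n e_m\to kG_m\to 0$, observes that $\Hom_{\C_n}(k\C_n e_m, k\C_n e_0)$ is one-dimensional (since $0$ is initial), and bounds $\Hom_{\C_n}(U,k\C_n e_0)$ by one using only the transitivity of $G_{m+1}$ on $\C(m,m+1)$. Characteristic $0$ enters because vanishing of $\Ext^1$ against the modules $kG_m$ suffices for injectivity only once one knows (via Maschke) that every simple $\C_n$-module is a summand of some $kG_m$.

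Your argument instead upgrades the transitivity hypothesis to transitivity of $G_n$ on every $\C(l,n)$ and then identifies $k\C_n e_0$ as the right Kan extension of the trivial $kG_n$-module along the inclusion of $\{n\}$ into $\C_n$. This yields the natural isomorphism $\Hom_{\C_n}(-,k\C_n e_0)\cong \Hom_{kG_n}(\mathrm{ev}_n(-),k)$, from which exactness is immediate. The payoff is a cleaner, more conceptual statement: injectivity is visibly inherited from injectivity of the trivial $kG_n$-module, and no $\Ext$ computation or long exact sequence is needed. The cost is the extra inductive step establishing global transitivity, whereas the paper gets by with the single-step transitivity hypothesis as stated. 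Both proofs ultimately invoke Maschke's theorem at exactly one point.
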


\begin{proof}
It suffices to prove that $\Ext^1_{\C_n} (k G_m, k\C_n e_0) = 0$ for all $m\in \Ob(\C_n)$. This is clear when $m=n$ for $k G_n$ is a projective $\C_n$-module.

Suppose $m<n$. Let $P=k\C_n e_m$ and let $U$ be the submodule $\bigoplus_{l>m} P(l)$ of $P$.
We have a short exact sequence
\begin{equation*}
0\longrightarrow U \longrightarrow P \longrightarrow k G_m  \longrightarrow 0,
\end{equation*}
and hence a long exact sequence
\begin{multline*}
0 \to \Hom_{\C_n}(k G_m, k\C_n e_0) \to \Hom_{\C_n} (P, k\C_n e_0) \to \Hom_{\C_n} (U, k\C_n e_0) \\
 \to \Ext^1_{\C_n} (k G_m, k\C_n e_0) \to \Ext^1_{\C_n} (P, k\C_n e_0) \to \cdots .
\end{multline*}
Since $m<n$, one has $\Hom_{\C_n}(k G_m, k\C_n e_0)=0$. Since $P$ is projective, $\Ext^1_{\C_n} (P, k\C_n e_0) =0$. Note that
\begin{equation*}
\dim_k \Hom_{k\C_n} (P, k\C_n e_0) = \dim_k k\C_n (0,m) = 1.
\end{equation*}
Since $U$ is generated by $U(m+1)$, and $G_{m+1}$ acts transitively on $U(m+1)$, one has
\begin{equation*}
\dim_k \Hom_{k\C_n} (U, k\C_n e_0) \leqslant \dim_k \Hom_{k G_{m+1}} (U(m+1), k\C_n (0,m+1)) \leqslant 1.
\end{equation*}
Hence, we must have $\Ext^1_{\C_n} (k G_m, k\C_n e_0) = 0$
\end{proof}

\begin{corollary} \label{kCe_0 is injective}
Suppose that $k$ is a field of characteristic 0. If $\C$ is locally Noetherian, satisfies the transitivity condition, and $0$ is an initial object of $\C$, then $k\C e_0$ is an injective $\C$-module.
\end{corollary}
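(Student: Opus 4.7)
The plan is to bootstrap the finite-category injectivity result of Lemma \ref{kC_n e_0 is injective} up to the whole category $\C$ via the truncation machinery developed in Section \ref{preliminaries}. Since $k\C e_0$ is generated by $e_0$, it lies in $\C\module$, and Corollary \ref{injectivity in C-mod} reduces injectivity in $\C\Module$ to injectivity in $\C\module$. Thus it suffices to verify that $\Ext^1_\C(V, k\C e_0) = 0$ for every $V \in \C\module$.

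First I would fix such a $V$ and apply Lemma \ref{reduction to C_n} to obtain some $N \in \Ob(\C)$ such that, for all $n \geqslant N$,
\begin{equation*}
\Ext^1_\C(V, k\C e_0) = \Ext^1_{\C_n}(\jmath^*(V),\, \jmath^*(k\C e_0)),
\end{equation*}
where $\jmath : \C_n \hookrightarrow \C$ denotes the inclusion. Next I would observe that $\jmath^*(k\C e_0) = k\C_n e_0$, which is immediate from the equality $\C(0,m) = \C_n(0,m)$ for $m \leqslant n$. Lemma \ref{kC_n e_0 is injective} then yields $\Ext^1_{\C_n}(\jmath^*(V), k\C_n e_0) = 0$, and the corollary follows.

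There is essentially no obstacle at this stage: the real work has already been carried out in Lemma \ref{kC_n e_0 is injective}, where the transitivity condition and the characteristic zero hypothesis were used to control the dimensions of the Hom-spaces from modules concentrated in higher degrees into $k\C_n e_0$. The role of the present corollary is purely to package that finite-level computation into a statement about the full category $\C$, and local Noetherianity is exactly what is needed to justify the reduction $\C \rightsquigarrow \C_n$ via Lemma \ref{reduction to C_n}.
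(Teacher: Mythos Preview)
Your proof is correct and follows essentially the same route as the paper: reduce to $\C\module$ via Corollary~\ref{injectivity in C-mod}, then use Lemma~\ref{reduction to C_n} to pass to $\C_n$, and finally invoke Lemma~\ref{kC_n e_0 is injective}. The only difference is that you spell out the intermediate identification $\jmath^*(k\C e_0) = k\C_n e_0$ and the $\Ext^1$ vanishing explicitly, which the paper leaves implicit.
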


\begin{proof}
By Corollary \ref{injectivity in C-mod}, it suffices to show that $k\C e_0$ is injective in $\C\module$. By Lemma \ref{reduction to C_n}, this follows from injectivity of $k\C_n e_0$ which is proved in Lemma \ref{kC_n e_0 is injective}.
\end{proof}

\subsection{Field of characteristic $p$}
Let us show that Corollary \ref{kCe_0 is injective} is false when $k$ is a field of characteristic $p>0$ and $\C$ is the category $\FI$.

For any $n\geqslant p$, we have a right action of the symmetric group $S_p$ on $\C(p,n)$. Let $U(n)$ be the subspace of $S_p$-invariant elements in $k\C(p,n)$, and let $U=\bigoplus_{n\geqslant p} U(n)$. Then $U$ is a submodule of $k\C e_p$. Now let $\J(n)$ be the set of $S_p$-orbits in $\C(p,n)$. For each orbit $J\in\J(n)$, let $\xi_J\in U(n)$ be the sum of all the $p!$ elements in $J$. Then the collection of $\xi_J$ for $J\in\J(n)$ is a basis for $U(n)$. There is a homomorphism $f: U \to k\C e_0$ such that $f(\xi_J)$ is the unique element of $\C(0,n)$ if $J\in \J(n)$. We define a submodule $W$ of $k\C e_0 \oplus k\C e_p$ as follows. For any $n\geqslant p$, let $W(n)$ be the set of all elements in $k\C(0,n)\oplus U(n)$ of the form $f(\xi) + \xi$ for $\xi \in U(n)$.

Let $V=(k\C e_0 \oplus k\C e_p)/W$, and let $\pi: k\C e_0 \oplus k\C e_p \to V$ be the canonical projection. Denote by $i: k\C e_0 \to V$ the restriction of $\pi$ to $k\C e_0$. It is clear that $i$ is a monomorphism. We claim that the short exact sequence
\begin{equation*}
0 \longrightarrow k\C e_0 \stackrel{i}{\longrightarrow} V \longrightarrow V/i(k\C e_0) \longrightarrow 0
\end{equation*}
does not split. Indeed, any homomorphism $k\C e_0 \oplus k\C e_p\to k\C e_0$ whose restriction to $k\C e_0$ is the identity map cannot vanish identically on $W$, for any homomorphism $k\C e_p \to k\C e_0$ must vanish identically on $U$.

\section{Restriction and coinduction along genetic functors}

Throughout this section, $k$ denotes any commutative ring.

\subsection{Restriction and coinduction}
A coinduction functor can be defined whenever one has a subring of a ring, and it is a right adjoint functor to the restriction functor; we refer the reader to \cite[Section 2.8]{Benson} for a clear exposition on the definition and basic properties of the coinduction functor in such a general setting. In this section, we begin our study of the coinduction functor in our special setting, where the pair of subring and the ring are isomorphic.

Let $\iota : \C \to \C$ be a faithful functor such that $\iota(n)=n+1$ for all $n\in\Ob(\C)$. We define the restriction functor $\s: \C\Module \longrightarrow \C\Module$ by $\s(V)=V\circ \iota$ for all $V\in\C\Module$; thus, $\s(V)(n)=V(n+1)$.

\begin{definition}
Suppose $V\in\C\Module$. We define $\q(V)\in\C\Module$ by
\begin{equation*}
\q(V)(n) = \Hom_\C (\s(k\C e_n), V) \quad \mbox{ for each }  n\in \Ob(\C).
\end{equation*}
We call $\q: \C\Module \longrightarrow \C\Module$ the \emph{coinduction} functor.
\end{definition}

Observe that any $\alpha \in \C(m,n)$ defines a $\C$-module homomorphism
\begin{equation*}
\s(k\C e_n) \longrightarrow \s(k\C e_m), \quad \gamma \mapsto \gamma\alpha.
\end{equation*}
The $\C$-module structure on $\q(V)$ is defined in the natural way as follows: if $\alpha\in \C(m,n)$ and $\varrho\in \q(V)(m)$, then $\alpha(\varrho)\in \q(V)(n)$ is the $\C$-module homomorphism
\begin{equation*}
\s(k\C e_n) \longrightarrow V, \quad \gamma \mapsto \varrho(\gamma\alpha).
\end{equation*}

\begin{lemma} \label{Q is right adjoint}
The functor $\q$ is right adjoint to the functor $\s$.
\end{lemma}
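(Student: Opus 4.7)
The plan is to exhibit an explicit bijection
\begin{equation*}
\Phi_{V,W} : \Hom_\C(\s(V),W) \longrightarrow \Hom_\C(V,\q(W)),
\end{equation*}
natural in $V$ and $W$, and check it is an isomorphism of $k$-modules. Both directions are built in Yoneda fashion from how each side behaves on the generator $e_n$ of $k\C e_n$.

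For the forward map, given $\phi : \s(V) \to W$ and $v \in V(n)$, let $f_v : k\C e_n \to V$ be the unique $\C$-module homomorphism sending $e_n$ to $v$, and set $\Phi(\phi)(v) := \phi \circ \s(f_v) \in \Hom_\C(\s(k\C e_n),W) = \q(W)(n)$. Concretely, this sends $\gamma \in \s(k\C e_n)(m) = k\C(n,m+1)$ to $\phi(\gamma v) \in W(m)$. For the inverse, given $\psi : V \to \q(W)$ and $x \in \s(V)(n) = V(n+1)$, I set $\Psi(\psi)(x) := \psi(x)(e_{n+1})$, regarding $e_{n+1}$ as an element of $k\C(n+1,n+1) = \s(k\C e_{n+1})(n)$.

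Three things need to be checked. First, that $\Phi(\phi)$ and $\Psi(\psi)$ are $\C$-module homomorphisms. For $\Phi(\phi)$ this is immediate from the recipe for the $\C$-action on $\q(W)$ given just before the lemma: $\Phi(\phi)(\beta v)(\gamma) = \phi(\gamma\beta v) = \Phi(\phi)(v)(\gamma\beta) = (\beta \Phi(\phi)(v))(\gamma)$. For $\Psi(\psi)$ one must show $\Psi(\psi)(\alpha x) = \alpha\, \Psi(\psi)(x)$ when $\alpha \in \C(n,n')$ acts on $x \in \s(V)(n) = V(n+1)$ via $\iota(\alpha)$; after unwinding the two actions, this reduces to the identity $\iota(\alpha) = \alpha \cdot e_{n+1}$ inside $\s(k\C e_{n+1})$ (a direct consequence of how the action on $\s(k\C e_{n+1})$ is twisted through $\iota$), combined with $\C$-linearity of the map $\psi(x) : \s(k\C e_{n+1}) \to W$. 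Second, $\Phi$ and $\Psi$ are mutually inverse: on one side $\Psi\Phi(\phi)(x) = \Phi(\phi)(x)(e_{n+1}) = \phi(e_{n+1}\cdot x) = \phi(x)$, and on the other, for $v \in V(n)$ and $\gamma \in k\C(n,m+1)$,
\begin{equation*}
\Phi\Psi(\psi)(v)(\gamma) = \Psi(\psi)(\gamma v) = \psi(\gamma v)(e_{m+1}) = (\gamma\psi(v))(e_{m+1}) = \psi(v)(e_{m+1}\gamma) = \psi(v)(\gamma),
\end{equation*}
using $\C$-linearity of $\psi$ together with the formula for the $\C$-action on $\q(W)$. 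Third, naturality in $V$ and $W$ is transparent from the formulas, as composition with a map $V \to V'$ or $W \to W'$ passes through all the constructions unchanged.

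The only step that requires care is the $\C$-module check for $\Psi(\psi)$, because the $\C$-action on $\s(-)$ is twisted through $\iota$. No deeper idea is needed, so I do not anticipate a genuine obstacle.
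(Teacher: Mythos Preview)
Your argument is correct. The explicit unit/counit--style bijection you wrote down works, and your verifications are all fine; in particular the point you flagged as delicate---that $\alpha\cdot e_{n+1}=\iota(\alpha)$ in $\s(k\C e_{n+1})$---is exactly what makes the $\C$-linearity of $\Psi(\psi)$ go through.

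Your route is genuinely different from the paper's. The paper does not build the adjunction by hand; instead it introduces the $k\C$-bimodule $M=\bigoplus_{m\geqslant 0,\,n\geqslant 1} k\C(m,n)$ with left action twisted through $\iota$, observes that $M\otimes_{k\C} V\cong \s(V)$, and then invokes the tensor--hom adjunction $\Hom_{k\C}(M\otimes_{k\C} V,W)\cong \Hom_{k\C}(V,\Hom_{k\C}(M,W))$. The extra wrinkle there is that $\Hom_{k\C}(M,W)$ is a \emph{product} $\prod_m \q(W)(m)$ rather than the direct sum $\q(W)$, and the paper handles this by noting that any $k\C$-map out of a \emph{graded} module $V$ must land in the direct-sum part. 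Your elementwise construction sidesteps that graded-versus-ungraded issue entirely, at the cost of a few more routine equivariance checks. Conversely, the bimodule picture makes it transparent why $\s$ is exact and preserves projectives (since $M$ is projective on the right), which feeds directly into the Eckmann--Shapiro argument used immediately afterward.
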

\begin{proof}
Let
\begin{equation*}
M = \bigoplus_{\substack{m\geqslant 0,\\n\geqslant 1}} k\C(m,n).
\end{equation*}
We have a $k\C$-bimodule structure on $M$ defined by
\begin{equation*}
\alpha \cdot \gamma = \iota(\alpha)\gamma,\qquad \gamma \cdot \alpha = \gamma\alpha,
\end{equation*}
for $\alpha \in k\C$ and $\gamma \in M$. By the tensor-hom adjunction, one has
\begin{equation*}
\Hom_{k\C} (M\otimes_{k\C} V, W) = \Hom_{k\C} (V, \Hom_{k\C} (M, W))
\end{equation*}
for any $V, W\in \C\Module$.

The $k\C$-module homomorphism
\begin{equation*}
M\otimes_{k\C} V \longrightarrow \s(V), \quad \gamma \otimes v \mapsto \gamma v
\end{equation*}
has an inverse defined on $\s(V)(n)$ by $v\mapsto e_{n+1}\otimes v$, for each $n\in \Ob(\C)$.
Hence, $M\otimes_{k\C} V$ is isomorphic to $\s(V)$.

On the other hand, there is a $k\C$-module direct sum decomposition
\begin{equation*}
M = \bigoplus_{m\geqslant 0} \s(k\C e_m),
\end{equation*}
so
\begin{equation*}
\Hom_{k\C} (M, W)  =  \prod_{m\geqslant 0} \Hom_{\C} (\s(k\C e_m), W).
\end{equation*}
But since $V$ is a \emph{graded} $k\C$-module, the image of any $k\C$-module homomorphism from $V$ to $\Hom_{k\C}(M,W)$ lies in $\q(W)$. It follows that
\begin{equation*}
 \Hom_{k\C} (V, \Hom_{k\C} (M, W)) = \Hom_\C (V, \q(W)).
\end{equation*}
Thus, $\Hom_\C(\s(V), W)  = \Hom_\C( V,\q(W))$.
\end{proof}

Following \cite{GL-Koszulity}, we call $\iota$ a \emph{genetic} functor if, for each $n\in \Ob(\C)$, the $\C$-module $\s(k\C e_n)$ is projective and generated in degrees $\leqslant n$.

\begin{lemma} \label{eckmann-shapiro2}
Suppose $\iota$ is a genetic functor. Then for any $V, W\in \C\Module$, one has
\begin{equation*}
\Ext^i_{\C} (\s(V), W) = \Ext^i_{\C} (V,\q(W))
\end{equation*}
for all $i\geqslant 1$.
\end{lemma}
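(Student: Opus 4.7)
The plan is to imitate the standard Eckmann--Shapiro argument: produce a projective resolution on one side, transport it through $\s$, and then invoke the $(\s,\q)$-adjunction at the cochain level.

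First I would verify the three auxiliary facts needed. The functor $\s$ is visibly exact: by definition $\s(V)(n) = V(\iota(n)) = V(n+1)$, and a sequence of $\C$-modules is exact if and only if it is exact at each object, so pre-composition with $\iota$ preserves exactness. Next, $\s$ preserves all projectives: the genetic hypothesis says $\s(k\C e_n)$ is projective for every $n \in \Ob(\C)$; since $\s$ is defined objectwise it commutes with arbitrary direct sums, and since every projective $\C$-module is a direct summand of a direct sum of modules of the form $k\C e_n$ (as recalled in the Projective Resolutions subsection), it follows that $\s$ sends projectives to projectives. Finally, the adjunction $\Hom_\C(\s(V), W) = \Hom_\C(V, \q(W))$ is precisely Lemma \ref{Q is right adjoint}, and this identification is natural in both $V$ and $W$.

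With these in hand, choose a projective resolution $P^\bullet \to V \to 0$ by $\C$-modules. Applying $\s$ yields a complex $\s(P^\bullet) \to \s(V) \to 0$ which, by exactness of $\s$, is again exact, and whose terms $\s(P^{-i})$ are projective by the previous step. Hence $\s(P^\bullet)$ is a projective resolution of $\s(V)$, and one can compute
\begin{equation*}
\Ext^i_\C(\s(V), W) = H^i\bigl(\Hom_\C(\s(P^\bullet), W)\bigr).
\end{equation*}
By the naturality of the adjunction, the cochain complex $\Hom_\C(\s(P^\bullet), W)$ is isomorphic to $\Hom_\C(P^\bullet, \q(W))$, and the cohomology of the latter computes $\Ext^i_\C(V, \q(W))$. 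This gives the desired equality for all $i \geqslant 1$.

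There is no real obstacle here; the only point that requires any care is making sure that $\s$ preserves not merely the specific projectives $k\C e_n$ but arbitrary projectives, which is why I flagged the need to observe that $\s$ commutes with direct sums and hence with arbitrary direct summands. Everything else is formal from the adjunction in Lemma \ref{Q is right adjoint} together with exactness of $\s$.
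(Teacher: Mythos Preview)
Your proposal is correct and is essentially the same approach as the paper's: the paper simply says the result is immediate from Lemma \ref{Q is right adjoint} and the Eckmann--Shapiro lemma, and what you have written is precisely the standard Eckmann--Shapiro argument spelled out in detail. Your care about $\s$ preserving arbitrary projectives (not just the $k\C e_n$) is exactly the point the paper notes in the remark following the lemma.
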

\begin{proof}
This is immediate from Lemma \ref{Q is right adjoint} and the Eckmann-Shapiro lemma.
\end{proof}

\begin{remark}
The condition in Lemma \ref{eckmann-shapiro2} that $\iota$ is a genetic functor can be weakened. Indeed, we only need to use the property that for each $n\in \Ob(\C)$, the $\C$-module $\s(k\C e_n)$ is projective.
\end{remark}

\subsection{Genetic functors for $\FI_G$ and $\VI$}
Suppose $\C$ is $\FI_G$ or $\VI$.  There is a natural monoidal structure $\odot$ on $\C$ such that $m\odot n=m+n$  for all $m, n \in \Ob(\C)$. Let us recall this monoidal structure.

{\bf Case 1}: Suppose $\C$ is $\FI_G$. For any $(f_1, c_1)\in \C(m_1,n_1)$ and $(f_2, c_2)\in \C(m_2,n_2)$, we define $(f_1, c_1) \odot (f_2, c_2)$ to be the morphism $(f,c)\in \C(m_1+m_2, n_1+n_2)$ where
\begin{gather*}
f(t) = \left\{  \begin{array}{ll}
f_1(t) & \mbox{ if } t\leqslant m_1,\\
f_2(t-m_1)+n_1 & \mbox{ if } t> m_1.
\end{array}\right.
\end{gather*}
and
\begin{gather*}
c(t) = \left\{  \begin{array}{ll}
c_1(t) & \mbox{ if } r\leqslant m_1,\\
c_2(t-m_1) & \mbox{ if } t> m_1.
\end{array}\right.
\end{gather*}

{\bf Case 2}: Suppose $\C$ is $\VI$. For any $f_1\in \C(m_1, n_1)$ and $f_2 \in \C(m_2,n_2)$, we define $f_1 \odot f_2 \in \C(m_1+m_2, n_1+ n_2)$ by
\begin{equation*}
f_1 \odot f_2 = f_1 \oplus f_2 : \F^{m_1}\oplus \F^{m_2} \longrightarrow \F^{n_1}\oplus \F^{n_2}.
\end{equation*}

In both cases, we let $\iota: \C \to \C$ be the functor defined by (\ref{iota}). It is clear that $\iota$ is faithful. In the next two sections, we shall briefly recall the proof that $\iota$ is a genetic functor, and examine the structure of $\q(k\C e_m)$ for each $m\in \Ob(\C)$.

\section{Structure of $\q(k\C e_m)$ when $\C$ is $\FI_G$} \label{FI_G section}

Throughout this section, $k$ denotes any commutative ring.

\subsection{Structure of $\s(k\C e_n)$}  \label{structure of S for FI_G}
Suppose that $\C$ is $\FI_G$. Let $n\in \Ob(\C)$. We now recall the structure of $\s(k\C e_n)$.

Denote by $e$ the identity element of $G$, and define the morphisms
\begin{equation*}
(f_n,, c_n) \in \C(n,n+1) \quad\mbox{ and }\quad (f_{n,r,g}, c_{n,r,g}) \in \C(n, n) \quad \mbox{ for } r\in [n],\, g\in G,
\end{equation*}
by
\begin{align*}
f_n(t) &= t+1,\\
 c_n(t) &= e,\\
f_{n,r,g}(t) &= \left\{ \begin{array}{ll}
t+1 & \mbox{ if }t<r,\\
1 & \mbox{ if } t=r,\\
t & \mbox{ if }t>r,
\end{array} \right. \\
c_{n,r,g}(t) &=  \left\{ \begin{array}{ll}
e & \mbox{ if }t \neq r,\\
g & \mbox{ if } t=r,
\end{array} \right.
\end{align*}
for $t\in [n]$.

Now, for any $l \in \Ob(\C)$, $r\in [n]$, $g\in G$, define the maps
\begin{gather*}
\Phi_{n,0} : \C(n, l) \longrightarrow \C(n,l+1), \quad \alpha\mapsto \iota(\alpha)\circ (f_n,c_n);\\
\Phi_{n,r,g} : \C(n-1,l) \longrightarrow \C(n,l+1), \quad \alpha\mapsto \iota(\alpha)\circ (f_{n,r,g},c_{n,r,g}).
\end{gather*}
We may extend these maps linearly to $\C$-module homomorphisms
\begin{equation*}
\Phi_{n,0} : k\C e_n \longrightarrow \s(k\C e_n) \quad \mbox{ and } \quad \Phi_{n,r,g} : k\C e_{n-1}\longrightarrow \s(k\C e_n).
\end{equation*}
Let
\begin{equation} \label{phi for FI_G}
\Phi_n : k\C e_n \oplus \left( \bigoplus_{r\in [n],\, g\in G} k\C e_{n-1} \right) \longrightarrow \s(k\C e_n)
\end{equation}
be the $\C$-module homomorphism whose restriction to $k\C e_n$ is $\Phi_{n,0}$ and whose restriction to the direct summand $k\C e_{n-1}$ indexed by $r\in [n]$, $g\in G$ is $\Phi_{n,r,g}$. It is straightforward to verify that $\Phi_n$ is an isomorphism (see \cite[Section 5]{GL-Koszulity}).
Thus, $\iota$ is a genetic functor.

\subsection{Preliminary discussion of $\q(k\C e_m)$} \label{preliminary discussion 1}
We retain the notations of subsection \ref{structure of S for FI_G}. Let $m, n \in \Ob(\C)$. By the isomorphism (\ref{phi for FI_G}), one has the identification
\begin{equation*}
\q(k \C e_m)(n) = \Hom_{\C}( k\C e_n, k\C e_m ) \oplus \left( \bigoplus_{r\in [n],\, g\in G} \Hom_{\C} ( k\C e_{n-1}, k\C e_m) \right).
\end{equation*}
Denote by
\begin{align*}
\Psi_{n,0}: k\C(m,n) &\longrightarrow \q(k\C e_m)(n),\\
\Psi_{n,r,g} : k\C(m,n-1) &\longrightarrow \q(k\C e_m)(n)
\end{align*}
the linear maps where $\Psi_{n,0}$ is the natural bijection of $k\C(m,n)$ with the direct summand $\Hom_{\C} (k\C e_n, k\C e_m)$ of $\q(k\C e_m)(n)$, and $\Psi_{n,r,g}$ is the natural bijection of $k\C(m,n-1)$ with the direct summand $\Hom_{\C}(k\C e_{n-1}, k\C e_m)$ of $\q(k\C e_m)(n)$ indexed by $r\in [n]$, $g\in G$. We have a linear bijection
\begin{equation*}
\Psi_n: k\C(m,n) \oplus \left( \bigoplus_{r\in [n],\, g\in G} k\C(m,n-1) \right) \longrightarrow \q(k\C e_m)(n)
\end{equation*}
whose restriction to $k\C(m,n)$ is $\Psi_{n,0}$ and whose restriction to the direct summand $k\C(m,n-1)$ indexed by $r\in [n]$, $g\in G$ is $\Psi_{n,r,g}$.

The next lemma describes the $\C$-module structure of $\q(k\C e_m)$ in terms of the identifications $\Psi_n$ for $n\in \Ob(\C)$. We shall use the following notations. For any $l \geqslant 1$ and $r\in [l]$, let $\partial_r : [l]\setminus\{r\} \to [l-1]$ be the unique nondecreasing bijection. If  $\alpha = (f,c) \in \C(n,l)$, $r\in [l]\setminus \im(f)$, and $s\in [n]$, we let
\begin{align*}
\partial_r \alpha &= (\partial_r\circ f, c) \in \C(n,l-1), \\
 \alpha_s &= (\partial_{f(s)}\circ f \circ \partial_s^{-1} , c\circ \partial_s^{-1} )\in \C(n-1,l-1),
\end{align*}
where $\partial_s^{-1} : [n-1]\to [n]\setminus \{ s \}$ is the inverse map of $\partial_s$.

\begin{lemma} \label{preliminary structure 1}
Suppose that $\C$ is $\FI_G$.

(i) Let $\alpha =(f,c) \in \C(n,l)$ and $\beta \in \C(m,n)$. Then
\begin{equation*}
\alpha \Psi_{n,0}(\beta) = \Psi_{l,0}(\alpha\beta) + \sum_{\substack{r\in [l]\setminus \im(f),\\ g\in G } } \Psi_{l, r, g} (\partial_r \alpha \beta).
\end{equation*}
(ii) Let $\alpha = (f,c) \in \C(n,l)$, $\beta \in \C(m,n-1)$, $s \in [n]$, and $h \in G$. Then
\begin{equation*}
\alpha \Psi_{n,s,h}(\beta) = \Psi_{l, f(s), h\cdot c(s)^{-1}} (\alpha_s \beta).
\end{equation*}
\end{lemma}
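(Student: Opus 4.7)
The proof is a direct computation that unfolds both sides using the explicit form of $\Phi_n$ recalled in Subsection \ref{structure of S for FI_G}. The key observation is that, by definition of $\Psi_l$, the $\Psi_{l,0}$-coefficient of any element $\varrho' \in \q(k\C e_m)(l) = \Hom_\C(\s(k\C e_l), k\C e_m)$ equals $\varrho'(\Phi_{l,0}(e_l)) = \varrho'((f_l, c_l))$, while its $\Psi_{l,r,g}$-coefficient equals $\varrho'(\Phi_{l,r,g}(e_{l-1})) = \varrho'((f_{l,r,g}, c_{l,r,g}))$. Combining this with the formula $\alpha(\varrho)(\gamma) = \varrho(\gamma\alpha)$ that governs the $\C$-action on $\q$, the whole lemma reduces to decomposing the two composites $(f_l, c_l)\alpha$ and $(f_{l,r,g}, c_{l,r,g})\alpha$ in the direct sum decomposition of $\s(k\C e_n)$ supplied by $\Phi_n$, and then reading off what $\Psi_{n,0}(\beta)$ or $\Psi_{n,s,h}(\beta)$ does to each summand.

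For the first composite, substituting $\alpha = (f,c) \in \C(n,l)$ into the composition law of $\FI_G$ immediately gives $(f_l, c_l)\alpha = (t \mapsto f(t)+1, c) \in \C(n, l+1)$, which is exactly $\Phi_{n,0}(\alpha)$. For the second composite, I split on whether $r \in \im(f)$. If $r \notin \im(f)$, then $f_{l,r,g}\circ f$ never attains the value $1$ and the coloring is left untouched, so the composite lies in the $\Phi_{n,0}$-summand and a short inspection identifies it with $\Phi_{n,0}(\partial_r \alpha)$. If instead $r = f(s)$ for the unique such $s \in [n]$, then the composite sends $s$ to $1$ with coloring $g\cdot c(s)$ and avoids $1$ at every other point, so it lies in the $\Phi_{n, s, g\cdot c(s)}$-summand; matching the remaining entries against the explicit formula for $\Phi_{n, s, g\cdot c(s)}$ shows it equals $\Phi_{n, s, g\cdot c(s)}(\alpha_s)$, with $\alpha_s$ precisely as defined before the statement.

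Now by construction, $\Psi_{n,0}(\beta): \s(k\C e_n) \to k\C e_m$ sends $\Phi_{n,0}(\gamma) \mapsto \gamma\beta$ and annihilates every $\Phi_{n,r',g'}$-summand, while $\Psi_{n,s,h}(\beta)$ sends $\Phi_{n,s,h}(\gamma) \mapsto \gamma\beta$ and annihilates every other summand. Feeding the two decompositions above into these recipes gives (i) and (ii) at once: in (i) only the $\Psi_{l,0}$- and the $\Psi_{l,r,g}$-coefficients with $r \notin \im(f)$ survive, with values $\alpha\beta$ and $(\partial_r\alpha)\beta$ respectively; in (ii) only the $\Psi_{l,f(s), h\cdot c(s)^{-1}}$-coefficient survives (the condition $g\cdot c(s) = h$ rearranges to $g = h\cdot c(s)^{-1}$), with value $\alpha_s\beta$. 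The only step beyond routine unpacking is the case $r \in \im(f)$ of the second composite, where one must verify that the indices produced by the $\Phi_n$-decomposition of $(f_{l,r,g}, c_{l,r,g})\alpha$ are exactly the maps $\partial_{f(s)}\circ f \circ \partial_s^{-1}$ and $c\circ \partial_s^{-1}$ appearing in the definition of $\alpha_s$; this is the one genuine piece of combinatorial bookkeeping in the argument.
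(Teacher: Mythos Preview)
Your proof is correct and follows essentially the same approach as the paper's: both compute $\alpha\Psi_{n,\ast}(\beta)$ by evaluating on the $\Phi_l$-decomposition of $\s(k\C e_l)$ and reducing to the identities $(f_l,c_l)\alpha = \Phi_{n,0}(\alpha)$, $(f_{l,r,g},c_{l,r,g})\alpha = \Phi_{n,0}(\partial_r\alpha)$ for $r\notin\im(f)$, and $(f_{l,r,g},c_{l,r,g})\alpha = \Phi_{n,s,g\cdot c(s)}(\alpha_s)$ for $r=f(s)$. The only cosmetic difference is that you evaluate at the generators $\Phi_{l,0}(e_l)$ and $\Phi_{l,r,g}(e_{l-1})$ to read off coefficients directly, whereas the paper tests against $\Phi_{l,0}(\gamma)$ and $\Phi_{l,r,g}(\gamma)$ for arbitrary $\gamma$; since the maps involved are $\C$-module homomorphisms, these are equivalent.
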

\begin{proof}
(i) Suppose $\gamma \in \C(l,i)$. Then
\begin{align*}
\alpha \Psi_{n,0}(\beta) (\Phi_{l,0}(\gamma) ) &= \Psi_{n,0}(\beta) ( \iota(\gamma)\circ (f_l,c_l)\circ \alpha )\\
&= \Psi_{n,0}(\beta) ( \iota(\gamma)\circ  \iota(\alpha) \circ  (f_n,c_n) )\\
&= \Psi_{n,0}(\beta) ( \Phi_{ n, 0}  (\gamma \alpha) )\\
&= \gamma\alpha\beta\\
&=  \Psi_{l,0}(\alpha\beta) (\Phi_{l,0}(\gamma)).
\end{align*}

Suppose $\gamma \in \C(l-1,i)$, $r\in [l]\setminus \im(f)$, and $g\in G$. Then
\begin{align*}
\alpha \Psi_{n,0}(\beta) (\Phi_{l,r,g}(\gamma) ) &= \Psi_{n,0}(\beta) ( \iota(\gamma)\circ (f_{l,r,g},c_{l,r,g})\circ \alpha )\\
&=  \Psi_{n,0}(\beta) ( \iota(\gamma)\circ  \iota(\partial_r \alpha ) \circ  (f_n,c_n) )\\
&= \Psi_{n,0}(\beta) (\Phi_{n,0}(\gamma \partial_r \alpha) ) \\
&= \gamma \partial_r \alpha \beta \\
&= \Psi_{l, r, g} (\partial_r \alpha \beta) (\Phi_{l,r,g}(\gamma) ).
\end{align*}

Suppose $\gamma \in \C(l-1,i)$, $r\in \im(f)$, and $g\in G$. Then
\begin{align*}
\alpha \Psi_{n,0}(\beta) (\Phi_{l,r,g}(\gamma) ) &= \Psi_{n,0}(\beta) ( \iota(\gamma)\circ (f_{l,r,g},c_{l,r,g})\circ \alpha )\\
&= 0.
\end{align*}

(ii)  Suppose $\gamma \in \C(l,i)$. Then
\begin{align*}
\alpha \Psi_{n,s,h}(\beta) (\Phi_{l,0}(\gamma) )   &=  \Psi_{n,s,h}(\beta)(  \iota(\gamma)\circ (f_l,c_l)\circ \alpha ) \\
&=  \Psi_{n,s,h}(\beta)( \iota(\gamma)\circ  \iota(\alpha) \circ  (f_n,c_n) )\\
&= 0.
\end{align*}

Suppose $\gamma \in \C(l-1,i)$, $r\in [l]$, and $g\in G$.

If $r = f(s)$ and $g \cdot c(s) = h$, then
\begin{align*}
\alpha \Psi_{n,s,h}(\beta) (\Phi_{l,r,g}(\gamma) ) &= \Psi_{n,s,h}(\beta)( \iota(\gamma)\circ (f_{l,r,g},c_{l,r,g})\circ \alpha  )\\
&= \Psi_{n,s,h}(\beta)( \iota(\gamma)\circ \iota( \alpha_s ) \circ (f_{n,s,h}, c_{n,s,h} ) ) \\
&= \Psi_{n,s,h}(\beta)(\Phi_{n,s,h}(\gamma \alpha_s) )\\
&= \gamma \alpha_s\beta \\
&= \Psi_{l, f(s), h\cdot c(s)^{-1}} (\alpha_s \beta) ( \Phi_{l,r,g}(\gamma)  )
\end{align*}

If $r \neq f(s)$ or $g \cdot c(s) \neq h$, then
\begin{align*}
\alpha \Psi_{n,s,h}(\beta) (\Phi_{l,r,g}(\gamma) ) &= \Psi_{n,s,h}(\beta)( \iota(\gamma)\circ (f_{l,r,g},c_{l,r,g})\circ \alpha  )\\
&= 0.
\end{align*}
\end{proof}

\subsection{Proof of Theorem \ref{main result on q 1}}
We retain the notations of subsection \ref{preliminary discussion 1}.

\begin{proof}[Proof of Theorem \ref{main result on q 1}]
Let $\C$ be $\FI_G$. Let $m\in \Ob(\C)$. We need to prove that $\q(k\C e_m)$ is isomorphic to $k\C e_m \oplus k\C e_{m+1}$.

For each $n\in \Ob(\C)$, let $U(n)$ be the image of $\displaystyle\bigoplus_{r\in [n],\, g\in G} k\C(m,n-1)$ in $\q(k \C e_m)(n)$ under $\Psi_n$, and let $U = \bigoplus_{n\in \Ob(\C)} U(n)$. By Lemma \ref{preliminary structure 1}, $U$ is a $\C$-submodule of $\q(k\C e_m)$ and there is a short exact sequence
\begin{equation*}
0 \longrightarrow U \longrightarrow \q(k\C e_m) \longrightarrow k\C e_m \longrightarrow 0.
\end{equation*}
Since $k\C e_m$ is projective, this short exact sequence splits. It suffices to show that $U$ is isomorphic to $k\C e_{m+1}$.

For each $n\in \Ob(\C)$, define a linear map $\Theta_n: U(n) \to k\C(m+1, n)$ by
\begin{equation*}
 \Theta_n ( \Psi_{n,s,h} (\beta) ) = (f_{n,s,h}, c_{n,s,h})^{-1} \iota(\beta)
\end{equation*}
for all $s\in [n]$, $h\in G$, and $\beta \in \C(m,n-1)$. Let $\Theta: U \to k\C e_{m+1}$ be the linear map whose restriction to $U(n)$ is $\Theta_n$. It is easy to see that $\Theta$ is bijective.

We show now that $\Theta$ is a $\C$-module homomorphism. Suppose $\alpha =(f,c)\in \C(n,l)$, $\beta\in \C(m,n-1)$, $s\in [n]$, and $h\in G$. Observe that
\begin{equation*}
\iota(\alpha_s) \circ (f_{n,s,h}, c_{n,s,h}) = (f_{l, f(s), h\cdot c(s)^{-1}}, c_{l, f(s), h\cdot c(s)^{-1}}) \circ \alpha.
\end{equation*}
Hence,
\begin{align*}
\Theta_l (\alpha \Psi_{n,s,h}(\beta)) &= \Theta_l(\Psi_{l, f(s), h\cdot c(s)^{-1}} (\alpha_s \beta)) \\
&= (f_{l, f(s), h\cdot c(s)^{-1}}, c_{l, f(s), h\cdot c(s)^{-1}} )^{-1} \iota(\alpha_s \beta)\\
&= \alpha(f_{n,s,h}, c_{n,s,h})^{-1} \iota(\beta) \\
&= \alpha \Theta_n ( \Psi_{n,s,h} (\beta) ).
\end{align*}
\end{proof}

\section{Structure of $\q(k\C e_m)$ when $\C$ is $\VI$} \label{VI section}

Throughout this section, $k$ denotes any commutative ring.

\subsection{Structure of $\s(k\C e_n)$}  \label{structure of S for VI}
Suppose that $\C$ is $\VI$. Let $n\in \Ob(\C)$. We now recall the structure of $\s(k\C e_n)$.

We write elements $\alpha\in \C(n,l)$ as a $l\times n$-matrix. We write elements $u\in \F^n$ as a column vector and $u^t$ for its transpose. Let $\PP(\F^n)$ be the set of one dimensional vector subspaces of $\F^n$. For any $u\in \F^n$ and $\ell\in \PP(\F^n)$, we write $u^t(\ell)\neq 0$ if $u^t v \neq 0$ for any nonzero vector $v$ in $\ell$. For each $\ell \in \PP(\F^n)$, we choose and fix a $(n-1)\times n$-matrix $\varpi_\ell : \F^n \to \F^{n-1}$ whose kernel is $\ell$. We shall denote identity matrices by $I$.

Now, for any $u\in \F^n$ and $\ell \in \PP(\F^n)$ such that $u^t(\ell)\neq 0$, define the maps
\begin{gather*}
\Phi_{n,u,0} : \C(n, l) \longrightarrow \C(n,l+1), \quad \alpha\mapsto
\left( \begin{matrix} 1& 0 \\ 0 & \alpha \end{matrix} \right) \left( \begin{matrix}  u^t \\ I \end{matrix} \right) ; \\
\Phi_{n,u,\ell} : \C(n-1,l) \longrightarrow \C(n,l+1), \quad \alpha\mapsto
\left( \begin{matrix} 1& 0 \\ 0 & \alpha \end{matrix} \right) \left( \begin{matrix}  u^t \\ \varpi_\ell \end{matrix} \right) .
\end{gather*}
We may extend these maps linearly to $\C$-module homomorphisms
\begin{equation*}
\Phi_{n,u,0} : k\C e_n \longrightarrow \s(k\C e_n) \quad \mbox{ and } \quad \Phi_{n,u,\ell} : k\C e_{n-1}\longrightarrow \s(k\C e_n),
\end{equation*}
Let
\begin{equation} \label{phi for VI}
\Phi_n : \left( \bigoplus_{u\in \F^n} k\C e_n \right) \oplus \left( \bigoplus_{u\in \F^n} \bigoplus_{\substack{\ell \in \PP(\F^n) \\ u^t(\ell)\neq 0 }} k\C e_{n-1} \right) \longrightarrow \s(k\C e_n)
\end{equation}
be the $\C$-module homomorphism whose restriction to the direct summand $k\C e_n$ indexed by $u\in \F^n$ is $\Phi_{n,u,0}$ and whose restriction to the direct summand $k\C e_{n-1}$ indexed by $u\in \F^n$, $\ell\in \PP(\F^n)$ is $\Phi_{n,u,\ell}$. It is straightforward to verify that $\Phi_n$ is an isomorphism (see \cite[Section 5]{GL-Koszulity}).
Thus, $\iota$ is a genetic functor.

\subsection{Preliminary discussion of $\q(k\C e_m)$} \label{preliminary discussion 2}
We retain the notations of subsection \ref{structure of S for VI}. Let $m, n \in \Ob(\C)$. By the isomorphism (\ref{phi for VI}), one has the identification
\begin{equation*}
\q(k \C e_m)(n) = \left( \bigoplus_{u\in \F^n} \Hom_{\C}( k\C e_n, k\C e_m ) \right) \oplus  \left( \bigoplus_{u\in \F^n} \bigoplus_{\substack{\ell \in \PP(\F^n) \\ u^t(\ell)\neq 0 }}  \Hom_{\C} ( k\C e_{n-1}, k\C e_m) \right).
\end{equation*}
Denote by
\begin{align*}
\Psi_{n,u,0}: k\C(m,n) &\longrightarrow \q(k\C e_m)(n),\\
\Psi_{n,u,\ell} : k\C(m,n-1) &\longrightarrow \q(k\C e_m)(n)
\end{align*}
the linear maps where $\Psi_{n,u,0}$ is the natural bijection of $k\C(m,n)$ with the direct summand $\Hom_{\C} (k\C e_n, k\C e_m)$ of $\q(k\C e_m)(n)$ indexed by $u\in \F^n$, and $\Psi_{n,u,\ell}$ is the natural bijection of $k\C(m,n-1)$ with the direct summand $\Hom_{\C}(k\C e_{n-1}, k\C e_m)$ of $\q(k\C e_m)(n)$ indexed by $u\in \F^n$, $\ell\in \PP(\F^n)$. We have a linear bijection
\begin{equation*}
\Psi_n:\left( \bigoplus_{u\in \F^n} k\C (m,n) \right) \oplus  \left( \bigoplus_{u\in \F^n} \bigoplus_{\substack{\ell \in \PP(\F^n) \\ u^t(\ell)\neq 0 }} k\C(m, n-1) \right)\longrightarrow \q(k\C e_m)(n)
\end{equation*}
whose restriction to the direct summand $k\C(m,n)$ indexed by $u\in\F^n$ is $\Psi_{n,u,0}$ and whose restriction to the direct summand $k\C(m,n-1)$ indexed by $u\in \F^n$, $\ell \in \PP(\F^n)$ is $\Psi_{n,u,\ell}$.

The next lemma describes the $\C$-module structure of $\q(k\C e_m)$ in terms of the identifications $\Psi_n$ for $n\in \Ob(\C)$. We shall use the following notation. For any $\alpha\in \C(n,l)$ and $\wp\in \PP(\F^n)$, let $\alpha_{\wp} \in \C(n-1,l-1)$ be the unique linear map such that the following diagram commutes:
\begin{equation*}
\xymatrix{  \F^n \ar[r]^{\alpha} \ar[d]_{\varpi_{\wp}} & \F^l \ar[d]^{\varpi_{\alpha(\wp)} } \\
\F^{n-1} \ar[r]_{\alpha_{\wp}} & \F^{l-1} }
\end{equation*}

\begin{lemma} \label{preliminary structure 2}
Suppose  that $\C$ is $\VI$.

(i) Let $\alpha\in \C(n,l)$, $\beta\in \C(m,n)$, and $v\in \F^n$. Then
\begin{equation*}
\alpha \Psi_{n,v,0}(\beta) = \sum_{\substack{u \in \F^l \\ u^t \alpha = v^t}} \Psi_{l,u,0}(\alpha\beta) + \sum_{\substack{u \in \F^l \\ u^t \alpha = v^t}} \sum_{ \substack{\ell \in \PP(\F^l) \\ u^t(\ell)\neq 0 \\  \ell \nsubseteq \im(\alpha) } } \Psi_{l,u,\ell} ( \varpi_\ell \alpha \beta )  .
\end{equation*}

(ii) Let $\alpha\in \C(n,l)$, $\beta\in \C(m,n-1)$, $v\in \F^n$, and $\wp\in \PP(\F^n)$. Suppose $v^t(\wp) \neq 0$. Then
\begin{equation*}
\alpha \Psi_{n,v,\wp} (\beta) = \sum_{\substack{u \in \F^l \\ u^t \alpha = v^t}} \sum_{ \substack{\ell \in \PP(\F^l)\\ \ell = \alpha(\wp) } }  \Psi_{l,u,\ell}( \alpha_{\wp} \beta )  .
\end{equation*}
\end{lemma}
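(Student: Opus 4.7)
The plan is to mirror the proof of Lemma \ref{preliminary structure 1}. From the $\C$-module structure on $\q(k\C e_m)$, one has $\alpha\Psi_{n,v,*}(\beta)(\delta) = \Psi_{n,v,*}(\beta)(\delta\circ\alpha)$ for any $\delta\in\s(k\C e_l)$. Since $\Phi_l$ is an isomorphism, it suffices to check both identities after evaluating each side on the generators $\Phi_{l,u,0}(\gamma)$ (with $u\in\F^l$, $\gamma\in\C(l,i)$) and $\Phi_{l,u,\ell}(\gamma)$ (with $u\in\F^l$, $\gamma\in\C(l-1,i)$, $\ell\in\PP(\F^l)$, $u^t(\ell)\neq 0$) of $\s(k\C e_l)$. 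The task reduces to computing $\delta\circ\alpha$ and re-expanding the result via $\Phi_n$.

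The two matrix computations are
\begin{equation*}
\Phi_{l,u,0}(\gamma)\circ\alpha = \begin{pmatrix} u^t\alpha \\ \gamma\alpha \end{pmatrix}, \qquad \Phi_{l,u,\ell}(\gamma)\circ\alpha = \begin{pmatrix} u^t\alpha \\ \gamma\varpi_\ell\alpha \end{pmatrix}.
\end{equation*}
In the first, $\gamma\alpha$ is injective, so the result equals $\Phi_{n,w,0}(\gamma\alpha)$ where $w\in\F^n$ is determined by $w^t = u^t\alpha$. In the second, $\ker(\varpi_\ell\alpha) = \alpha^{-1}(\ell)$: when $\ell\nsubseteq\im(\alpha)$, this kernel is $0$ and the result is $\Phi_{n,w,0}(\gamma\varpi_\ell\alpha)$ with $w^t = u^t\alpha$; when $\ell\subseteq\im(\alpha)$, the kernel is a one-dimensional subspace $\wp'\in\PP(\F^n)$ satisfying $\alpha(\wp') = \ell$, and the defining diagram of $\alpha_{\wp'}$ gives $\varpi_\ell\alpha = \alpha_{\wp'}\varpi_{\wp'}$, so the result is $\Phi_{n,w,\wp'}(\gamma\alpha_{\wp'})$ with $w^t = u^t\alpha$.

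Now $\Psi_{n,v,0}(\beta)$ vanishes on every summand of $\s(k\C e_n)$ except the $v$-indexed $k\C e_n$ summand, on which it sends $\Phi_{n,v,0}(\nu)\mapsto\nu\beta$, and $\Psi_{n,v,\wp}(\beta)$ vanishes on every summand except the $(v,\wp)$-indexed $k\C e_{n-1}$ summand, on which it sends $\Phi_{n,v,\wp}(\nu)\mapsto\nu\beta$. Evaluating the left-hand sides on each generator therefore yields: for (i), $\gamma\alpha\beta$ from $\Phi_{l,u,0}(\gamma)$ when $u^t\alpha = v^t$, and $\gamma\varpi_\ell\alpha\beta$ from $\Phi_{l,u,\ell}(\gamma)$ when $u^t\alpha = v^t$ and $\ell\nsubseteq\im(\alpha)$; for (ii), $\gamma\alpha_\wp\beta$ from $\Phi_{l,u,\ell}(\gamma)$ when $u^t\alpha = v^t$ and $\ell = \alpha(\wp)$; all other pairings vanish. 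Direct evaluation of the right-hand sides on the same generators produces the same values. The implicit requirement $u^t(\ell)\neq 0$ in $\Psi_{l,u,\ell}$ is automatic in (ii), since $\ell = \alpha(\wp)$ together with $u^t\alpha = v^t$ and $v^t(\wp)\neq 0$ gives $u^t(\alpha(\wp)) = v^t(\wp)\neq 0$.

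The main bookkeeping challenge will be to verify that the dichotomy $\ell\subseteq\im(\alpha)$ versus $\ell\nsubseteq\im(\alpha)$ cleanly separates the nonzero contributions from the $\Phi_{l,u,\ell}$-generators: the $\nsubseteq$ case feeds exactly the second sum of (i), and the $\subseteq$ case feeds exactly the single sum of (ii), with no overlap or omission.
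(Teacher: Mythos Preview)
Your proposal is correct and follows essentially the same approach as the paper: both evaluate each side on the generators $\Phi_{l,u,0}(\gamma)$ and $\Phi_{l,u,\ell}(\gamma)$ of $\s(k\C e_l)$, compute $\Phi_{l,u,*}(\gamma)\circ\alpha$ as matrices, re-express the result in the $\Phi_n$-decomposition, and then read off the value under $\Psi_{n,v,*}(\beta)$. Your organization (first systematically rewriting $\delta\circ\alpha$ via $\Phi_n$, then evaluating) is slightly more streamlined than the paper's case-by-case treatment, but the computations and the $\ell\subseteq\im(\alpha)$ versus $\ell\nsubseteq\im(\alpha)$ dichotomy are identical.
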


\begin{proof}
(i) Suppose $\gamma \in \C(l,i)$ and $u\in \F^l$. One has
\begin{equation} \label{preliminary structure 2 equation 1}
\Phi_{l,u,0}(\gamma)\alpha
= \left( \begin{matrix} 1& 0 \\ 0 & \gamma \end{matrix} \right) \left( \begin{matrix} u^t \alpha \\ \alpha \end{matrix}\right)
= \left( \begin{matrix} 1& 0 \\ 0 & \gamma \alpha \end{matrix}  \right) \left( \begin{matrix} u^t \alpha \\ I \end{matrix} \right).
\end{equation}
Thus,
\begin{equation*}
\alpha \Psi_{n,v,0}(\beta) (\Phi_{l,u,0}(\gamma)) =  \Psi_{n,v,0}(\beta) (\Phi_{l,u,0}(\gamma)\alpha)
= \left\{  \begin{array}{ll}
0 & \mbox{ if } u^t\alpha \neq v^t, \\
\gamma\alpha\beta & \mbox{ if } u^t\alpha = v^t.
\end{array} \right.
\end{equation*}
In particular, when $u^t\alpha=v^t$, one has
\begin{equation*}
\alpha \Psi_{n,v,0}(\beta) (\Phi_{l,u,0}(\gamma)) =  \Psi_{l,u,0}(\alpha\beta)(\Phi_{l,u,0}(\gamma)).
\end{equation*}

Now suppose $\gamma\in \C(l-1,i)$, $u\in \F^l$, $\ell\in \PP(\F^l)$, and $u^t(\ell)\neq 0$. One has
\begin{equation*}
\Phi_{l,u,\ell}(\gamma)\alpha = \left( \begin{matrix} 1& 0 \\ 0 & \gamma \end{matrix} \right) \left( \begin{matrix} u^t \alpha \\  \varpi_\ell \alpha \end{matrix}\right)
=  \left( \begin{matrix} 1& 0 \\ 0 & \gamma \varpi_\ell \alpha \end{matrix}  \right) \left( \begin{matrix} u^t \alpha \\ I \end{matrix} \right).
\end{equation*}
Thus,
\begin{align*}
\alpha \Psi_{n,v,0}(\beta) (\Phi_{l,u,\ell}(\gamma)) &= \Psi_{n,v,0}(\beta) (\Phi_{l,u,\ell}(\gamma)\alpha) \\
&= \left\{  \begin{array}{ll}
0 & \mbox{ if } u^t\alpha \neq v^t \mbox{ or } \ell \subseteq \im(\alpha), \\
\gamma \varpi_\ell \alpha\beta & \mbox{ if } u^t\alpha = v^t \mbox{ and } \ell \nsubseteq \im(\alpha).
\end{array} \right.
\end{align*}
In particular, when $ u^t\alpha = v^t$ and $\ell \nsubseteq \im(\alpha)$, one has
\begin{equation*}
\alpha \Psi_{n,v,0}(\beta) (\Phi_{l,u,\ell}(\gamma)) = \Psi_{l,u,\ell} ( \varpi_\ell \alpha \beta ) (\Phi_{l,u,\ell}(\gamma)).
\end{equation*}

(ii)  Suppose $\gamma \in \C(l,i)$ and $u\in \F^l$. From (\ref{preliminary structure 2 equation 1}), one has
\begin{equation*}
\alpha \Psi_{n,v,\wp} (\beta)(\Phi_{l,u,0}(\gamma)) = \Psi_{n,v,\wp} (\beta) (\Phi_{l,u,0}(\gamma)\alpha) =0.
\end{equation*}

Now suppose $\gamma\in \C(l-1,i)$, $u\in \F^l$, $\ell\in \PP(\F^l)$, and $u^t(\ell)\neq 0$. One has
\begin{equation*}
\Phi_{l,u,\ell}(\gamma)\alpha = \left( \begin{matrix} 1& 0 \\ 0 & \gamma \end{matrix} \right) \left( \begin{matrix} u^t \alpha \\  \varpi_\ell \alpha \end{matrix}\right).
\end{equation*}
We can write $\left( \begin{matrix} u^t \alpha \\ \varpi_\ell \alpha \end{matrix}\right)$ in the form $ \left( \begin{matrix} 1& 0 \\ 0 & * \end{matrix} \right)  \left( \begin{matrix} u^t \alpha \\ \varpi_{\wp} \end{matrix}\right)$ if and only if $\ell = \alpha(\wp)$. If $\ell = \alpha(\wp)$, then
\begin{equation*}
\Phi_{l,u,\ell}(\gamma)\alpha =  \left( \begin{matrix} 1& 0 \\ 0 & \gamma \alpha_{\wp} \end{matrix} \right)  \left( \begin{matrix} u^t \alpha \\ \varpi_{\wp} \end{matrix}\right).
\end{equation*}
Thus,
\begin{align*}
\alpha\Psi_{n,v,\wp} (\beta) \Phi_{l,u,\ell}(\gamma) &= \Psi_{n,v,\wp} (\beta) (\Phi_{l,u,\ell}(\gamma)\alpha) \\
&= \left\{  \begin{array}{ll}
0 & \mbox{ if } u^t\alpha \neq v^t \mbox{ or } \ell \neq \alpha(\wp), \\
\gamma  \alpha_{\wp} \beta & \mbox{ if } u^t\alpha = v^t \mbox{ and } \ell = \alpha(\wp).
\end{array} \right.
\end{align*}
In particular, when $u^t\alpha = v^t$ and $ \ell = \alpha(\wp)$,  one has
\begin{equation*}
\alpha\Psi_{n,v,\wp} (\beta) (\Phi_{l,u,\ell}(\gamma)  ) =  \Psi_{l,u,\ell}( \alpha_{\wp} \beta ) (\Phi_{l,u,\ell}(\gamma)).
\end{equation*}
\end{proof}

\subsection{Proof of Theorem \ref{main result on q 2}}
We retain the notations of subsection \ref{preliminary discussion 2}.

\begin{proof}[Proof of Theorem \ref{main result on q 2}]
Let $\C$ be $\VI$. Let $m\in \Ob(\C)$. We need to prove that $\q(k\C e_m)$ contains a direct summand isomorphic to $k\C e_{m+1}$.

Since $k\C e_{m+1}$ is a projective $\C$-module, it suffices to construct a surjective homomorphism
\begin{equation*}
\pi : \q(k\C e_m) \longrightarrow k\C e_{m+1}.
\end{equation*}
For each $n\in \Ob(\C)$, we define a linear map $\pi_n : \q(k\C e_m)(n) \longrightarrow k\C(m+1, n)$ as follows:
\begin{enumerate}
\item
If $\beta\in \C(m,n)$ and $v\in \F^n$, let
\begin{equation*}
\pi_n (\Psi_{n,v,0}(\beta) ) = -q^{-n} \sum_{ \substack{ \wp \in \PP(\F^n) \\ v^t(\wp)\neq 0 \\  \wp \nsubseteq \im(\beta) } } \left( \begin{matrix} v^t \\ \varpi_{\wp}  \end{matrix} \right)^{-1} \left( \begin{matrix} 1 & 0 \\ 0 & \varpi_{\wp} \beta \end{matrix} \right).
\end{equation*}

\item
If $\beta\in \C(m,n-1)$, $v\in \F^n$, $\wp \in \PP(\F^n)$, and $v^t(\wp)\neq 0$, let
\begin{equation*}
\pi_n (\Psi_{n,v, \wp}(\beta) )= q^{-n} \left( \begin{matrix} v^t \\ \varpi_{\wp}  \end{matrix} \right)^{-1} \left( \begin{matrix} 1 & 0 \\ 0 & \beta \end{matrix} \right).
\end{equation*}
\end{enumerate}
Let $\pi$ be the linear map whose restriction to $\q(k\C e_m)(n)$ is $\pi_n$. We claim that:  $\pi$ is surjective, and $\pi$ is a $\C$-module homomorphism.

To show that $\pi$ is surjective, consider any $\gamma \in \C(m+1, n)$. We want to show that $\gamma$ is in the image of $\pi_n$. To this end, we write the $m+1$ columns of $\gamma$ as $\gamma_1, \ldots, \gamma_{m+1} \in \F^n$. Let $\wp \in \PP(\F^n)$ be the span of $\gamma_1$; so one has $\varpi_{\wp} \gamma_1=0$. Since $\gamma: \F^{m+1} \to \F^n$ is injective, its transpose $\gamma^t : \F^n \to \F^{m+1}$ is surjective. Hence, there exists $v\in \F^n$ such that
\begin{equation*}
v^t \gamma_1 = 1, \quad v^t \gamma_2 = \cdots v^t \gamma_{m+1} = 0.
\end{equation*}
Choose such a $v$. Then one has
\begin{equation*}
\left( \begin{matrix}  v^t \\ \varpi_{\wp} \end{matrix} \right) \gamma =  \left( \begin{matrix} 1 & 0 \\ 0 & \beta  \end{matrix} \right)
\end{equation*}
for some $\beta\in \C(m,n-1)$. Therefore,
\begin{equation*}
\pi_n( q^n \Psi_{n,v, \wp}(\beta) ) = \left( \begin{matrix} v^t \\ \varpi_{\wp}  \end{matrix} \right)^{-1} \left( \begin{matrix} 1 & 0 \\ 0 & \beta \end{matrix} \right) = \gamma.
\end{equation*}

It remains to check that $\pi$ is a $\C$-module homomorphism. Let $\alpha\in \C(n,l)$.

Observe that if $v\in \F^n$, then
\begin{equation*}
\# \{ u\in \F^l \mid u^t\alpha = v^t \} = q^{l-n}.
\end{equation*}

Suppose that $\beta\in \C(m,n)$ and $v\in \F^n$. One has:

\begin{align*}
& \pi_l (  \alpha  \Psi_{n,v,0}(\beta) ) \\
=& \sum_{\substack{u \in \F^l \\ u^t \alpha = v^t}} \pi_l( \Psi_{l,u,0}(\alpha\beta) ) + \sum_{\substack{u \in \F^l \\ u^t \alpha = v^t}} \sum_{ \substack{\ell \in \PP(\F^l) \\ u^t(\ell)\neq 0 \\  \ell \nsubseteq \im(\alpha) } } \pi_l( \Psi_{l,u,\ell} ( \varpi_\ell \alpha \beta )  ) \\
=& - q^{-l}  \sum_{\substack{u \in \F^l \\ u^t \alpha = v^t}}  \sum_{ \substack{ \ell \in \PP(\F^l) \\ u^t (\ell)\neq 0 \\ \ell \nsubseteq \im(\alpha \beta) } } \left( \begin{matrix} u^t \\ \varpi_\ell  \end{matrix} \right)^{-1} \left( \begin{matrix} 1 & 0 \\ 0 & \varpi_\ell \alpha \beta \end{matrix} \right) \\
&\quad + q^{-l}  \sum_{\substack{u \in \F^l \\ u^t \alpha = v^t}} \sum_{ \substack{\ell \in \PP(\F^l) \\ u^t (\ell)\neq 0 \\ \ell \nsubseteq \im(\alpha) } } \left( \begin{matrix} u^t \\ \varpi_\ell  \end{matrix} \right)^{-1} \left( \begin{matrix} 1 & 0 \\ 0 & \varpi_\ell \alpha \beta \end{matrix} \right) \\
=& - q^{-l}  \sum_{\substack{u \in \F^l \\ u^t \alpha = v^t}}  \sum_{ \substack{ \ell \in \PP(\F^l) \\ u^t (\ell)\neq 0 \\ \ell \subseteq \im(\alpha) \setminus \im(\alpha \beta) } } \left( \begin{matrix} u^t \\ \varpi_\ell  \end{matrix} \right)^{-1} \left( \begin{matrix} 1 & 0 \\ 0 & \varpi_\ell \alpha \beta \end{matrix} \right) \\
=& - q^{-l}  \sum_{\substack{u \in \F^l \\ u^t \alpha = v^t}}  \sum_{ \substack{ \wp \in \PP(\F^n) \\ v^t (\wp)\neq 0 \\ \wp \nsubseteq \im(\beta) } } \left( \begin{matrix} u^t \\ \varpi_{\alpha(\wp)}  \end{matrix} \right)^{-1} \left( \begin{matrix} 1 & 0 \\ 0 & \varpi_{\alpha(\wp)} \alpha \beta \end{matrix} \right) \\
=& - q^{-l}  \sum_{\substack{u \in \F^l \\ u^t \alpha = v^t}}  \sum_{ \substack{ \wp \in \PP(\F^n) \\ v^t (\wp)\neq 0 \\  \wp \nsubseteq \im(\beta) } } \left( \begin{matrix} u^t \\  \varpi_{\alpha(\wp)}  \end{matrix} \right)^{-1} \left( \begin{matrix} 1 & 0 \\ 0 &  \alpha_{\wp} \varpi_{\wp} \beta \end{matrix} \right).
\end{align*}
Observe that when $u^t\alpha = v^t$, one has
\begin{equation*}
 \left( \begin{matrix} u^t \\  \varpi_{\alpha(\wp)}  \end{matrix} \right) \alpha =  \left( \begin{matrix} v^t \\ \alpha_{\wp} \varpi_{\wp}   \end{matrix} \right) =  \left( \begin{matrix} 1 & 0 \\ 0 &  \alpha_{\wp} \end{matrix} \right)   \left( \begin{matrix} v^t \\  \varpi_{\wp}   \end{matrix} \right)
 \end{equation*}
which implies
\begin{equation} \label{key result equation}
 \left( \begin{matrix} u^t \\ \varpi_{\alpha(\wp)}  \end{matrix} \right)^{-1}  \left( \begin{matrix} 1 & 0 \\ 0 &  \alpha_{\wp} \end{matrix} \right)  = \alpha  \left( \begin{matrix} v^t \\  \varpi_{\wp}   \end{matrix} \right)^{-1}.
 \end{equation}
Hence, continuing our calculation from above,
\begin{align*}
& \pi_l (  \alpha  \Psi_{n,v,0}(\beta) ) \\
=&   - q^{-l}  \sum_{\substack{u \in \F^l \\ u^t \alpha = v^t}}  \sum_{ \substack{ \wp \in \PP(\F^n) \\ v^t (\wp)\neq 0 \\   \wp \nsubseteq \im(\beta) } } \alpha  \left( \begin{matrix} v^t \\  \varpi_{\wp}   \end{matrix} \right)^{-1}  \left( \begin{matrix} 1 & 0 \\ 0 &  \varpi_{\wp} \beta \end{matrix} \right) \\
=&  - q^{-l}  \cdot q^{l-n} \sum_{ \substack{ \wp \in \PP(\F^n) \\ v^t (\wp)\neq 0 \\  \wp \nsubseteq \im(\beta) } } \alpha  \left( \begin{matrix} v^t \\  \varpi_{\wp}   \end{matrix} \right)^{-1}  \left( \begin{matrix} 1 & 0 \\ 0 &  \varpi_{\wp} \beta \end{matrix} \right)  \\
=& \alpha \pi_n (  \Psi_{n,v,0}(\beta) ) .
\end{align*}

Now suppose that $\beta \in \C(m, n-1)$, $v\in \F^n$, $\wp \in \PP(\F^n)$, and $v^t(\wp)\neq 0$. One has:

\begin{align*}
&  \pi_l (  \alpha  \Psi_{n,v,\wp}(\beta) ) \\
=& \sum_{\substack{u \in \F^l \\ u^t \alpha = v^t}} \sum_{ \substack{\ell \in \PP(\F^l)\\ \ell = \alpha(\wp) } }  \pi_l ( \Psi_{l,u,\ell}( \alpha_{\wp} \beta ) ) \\
=& q^{-l} \sum_{\substack{u \in \F^l \\ u^t \alpha = v^t}} \sum_{ \substack{\ell \in \PP(\F^l)\\ \ell = \alpha(\wp) } }   \left( \begin{matrix} u^t \\  \varpi_\ell  \end{matrix} \right)^{-1} \left( \begin{matrix} 1 & 0 \\ 0 & \alpha_{\wp} \beta \end{matrix} \right) \\
=& q^{-l} \sum_{\substack{u \in \F^l \\ u^t \alpha = v^t}} \sum_{ \substack{\ell \in \PP(\F^l)\\ \ell = \alpha(\wp) } }   \alpha  \left( \begin{matrix} v^t \\  \varpi_{\wp}   \end{matrix} \right)^{-1}  \left( \begin{matrix} 1 & 0 \\ 0 &  \beta \end{matrix} \right) \quad \mbox{ using (\ref{key result equation}) }  \\
=& q^{-l}  \cdot q^{l-n}  \cdot \alpha  \left( \begin{matrix} v^t \\  \varpi_{\wp}   \end{matrix} \right)^{-1}  \left( \begin{matrix} 1 & 0 \\ 0 &  \beta \end{matrix} \right) \\
=& \alpha \pi_n (\Psi_{n,v, \wp}(\beta)) .
\end{align*}

This completes the verification that $\pi$ is a $\C$-module homomorphism.
\end{proof}

\section{Applications of coinduction functor} \label{applications of coinduction}

Throughout this section, we assume that $k$ is a field of characteristic 0.

\subsection{Proofs of Theorem \ref{main-result-1} and Corollary \ref{infinite global dimension}}
Suppose that $\C$ is $\FI_G$ or $\VI$. Recall that $\C$ is locally Noetherian by \cite{GL}.

\begin{proof}[Proof of Theorem \ref{main-result-1}]
It suffices to prove that $k\C e_n$ is injective for each $n\in \Ob(\C)$. We prove this by induction on $n$.

By Corollary \ref{kCe_0 is injective}, $k\C e_n$ is injective when $n=0$. Now suppose that $k\C e_n$ is injective when $n=m$ for some $m\in \Ob(\C)$. By Lemma \ref{eckmann-shapiro2}, $\q(k\C e_m)$ is injective. But $k\C e_{m+1}$ is a direct summand of $\q(k\C e_m)$ by Theorems \ref{main result on q 1} and \ref{main result on q 2}. It follows that $k\C e_n$ is injective for $n=m+1$.
\end{proof}

To carry out the inductive argument in the above proof, we need to know that $\q(k\C e_m)$ contains a direct summand isomorphic to $k\C e_{m+1}$. This was verified for the categories $\FI_G$ and $\VI$ by the explicit computations in Sections \ref{FI_G section} and \ref{VI section}; we do not know if a conceptual proof can be given based on certain underlying combinatorial properties of the category $\C$.

\begin{proof}[Proof of Corollary \ref{infinite global dimension}]
Suppose $V$ is a finitely generated $\C$-module and
\begin{equation*}
0 \to P^{-r} \to \cdots \to P^{-1} \to P^0 \to V \to 0
\end{equation*}
is an exact sequence where $P^0, \ldots, P^{-r}$ are finitely generated projective $\C$-modules. Then there are short exact sequences $0 \to Q^{i-1} \to P^i \to Q^i \to 0$ for $i=0, \ldots, -(r-1)$ where $Q^0, \ldots, Q^{-r}$ are finitely generated $\C$-modules such that $Q^0=V$ and $Q^{-r}=P^{-r}$. It follows from Theorem \ref{main-result-1} that these short exact sequences split; in particular, $V$ is a direct summand of $P^0$.
\end{proof}

\subsection{Torsion-free modules}
Suppose that $\C$ is $\FI_G$.

\begin{definition}
A $\C$-module $F$ is \emph{torsion-free} if $\Hom_\C(T, F)=0$ for all finite dimensional $\C$-modules $T$.
\end{definition}

\begin{lemma} \label{torsion-pair}
Suppose that $\C$ is $\FI_G$. Let $V$ be a finitely generated $\C$-module. Then there exists a short exact sequence
\begin{equation*}
0 \longrightarrow T \longrightarrow V \longrightarrow F \longrightarrow 0
\end{equation*}
such that $T$ is a finite dimensional $\C$-module and $F$ is a torsion-free $\C$-module.
\end{lemma}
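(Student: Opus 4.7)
The plan is to take $T$ to be the maximal finite dimensional submodule of $V$, namely the sum of all finite dimensional $\C$-submodules of $V$. The content of the lemma then splits into two verifications: that this maximal $T$ is itself finite dimensional, and that the quotient $V/T$ has no nonzero finite dimensional submodule (which is equivalent to torsion-freeness, since the image of any homomorphism from a finite dimensional module is a finite dimensional submodule, and conversely any nonzero finite dimensional submodule gives a nonzero homomorphism via inclusion).

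For the first verification, the key ingredient is that $\C$ is locally Noetherian over a field of characteristic $0$ by \cite{GL}. Hence $V$, being finitely generated, is Noetherian, and in particular its submodule $T$ is finitely generated. I would pick a finite generating set $v_1, \ldots, v_r$ for $T$; by construction each $v_i$ lies in some finite dimensional submodule $T_i$ of $V$. Then $T_1 + \cdots + T_r$ is a finite dimensional submodule of $V$ containing all the $v_i$, hence containing $T$, forcing $T$ to be finite dimensional.

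For the second verification, suppose $W$ is a finite dimensional $\C$-submodule of $V/T$ and let $\widetilde{W}$ be its preimage in $V$. Then the exact sequence
\begin{equation*}
0 \longrightarrow T \longrightarrow \widetilde{W} \longrightarrow W \longrightarrow 0
\end{equation*}
exhibits $\widetilde{W}$ as an extension of two finite dimensional modules, hence finite dimensional. By maximality of $T$, one has $\widetilde{W}\subseteq T$, so $W=0$. Therefore $F := V/T$ has no nonzero finite dimensional submodule, which gives $\Hom_\C(T', F)=0$ for every finite dimensional $T'$.

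I do not anticipate a real obstacle here: the only nontrivial input is the local Noetherianity of $\FI_G$ in characteristic $0$, which is already available and is what makes the standard ``take the maximal torsion submodule'' argument go through. One small point worth being explicit about is that the hypothesis ``finitely generated'' on $V$ is used precisely to conclude that $T$ is finitely generated; without it, the sum of all finite dimensional submodules would in general be only locally finite dimensional rather than finite dimensional.
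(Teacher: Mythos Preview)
Your proposal is correct and follows essentially the same approach as the paper: take $T$ to be the maximal finite dimensional submodule (equivalently, the sum of all finite dimensional submodules), invoke Noetherianity of $V$ to ensure $T$ is finite dimensional, and observe that $V/T$ is torsion-free. The paper's proof is simply a two-line compression of exactly this argument.
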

\begin{proof}
Since $V$ is Noetherian, there exists a maximal finite dimensional submodule $T$ of $V$. It is plain that $V/T$ is torsion-free.
\end{proof}

\begin{lemma} \label{nonzero hom to projective}
Suppose that $\C$ is $\FI_G$. Let $F$ be a finitely generated torsion-free $\C$-module. If $F\neq 0$, then there exists $n\in \Ob(\C)$ such that $\Hom_\C(F, k\C e_n)\neq 0$.
\end{lemma}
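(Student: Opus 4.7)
The plan is to follow the roadmap already sketched in the introduction of the paper, by first embedding $k\C e_0$ into $\s^a(F)$ and then applying the adjunction between $\s$ and $\q$. Let $a$ be the smallest element of $\Ob(\C)$ with $F(a)\neq 0$ and pick a nonzero element $v\in F(a)=\s^a(F)(0)$. Since $\C(0,n)$ consists of a single element $\alpha_n$ for every $n$, the $\C$-module $k\C e_0$ is one-dimensional in every degree, and its only proper nontrivial submodules are the ``truncations'' $\bigoplus_{n\geqslant b} k\C e_0(n)$ for $b\geqslant 1$. Hence the homomorphism $\phi_v : k\C e_0 \longrightarrow \s^a(F)$ determined by $e_0\mapsto v$ is either injective, or its kernel forces $\iota^a(\alpha_b)\cdot v=0$ for some $b\geqslant 1$.

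The main technical step is to rule out the second possibility using torsion-freeness of $F$. The key observation is that the morphism $\iota^a(\alpha_b)\in\C(a,a+b)$ is ``canonical'' in the sense that every $\gamma=(f,c)\in\C(a,a+b)$ can be written as $\gamma = g'\circ \iota^a(\alpha_b)$ for some $g'\in G_{a+b}$; indeed, any extension of $(f,c)$ to an element of $G_{a+b}$ will do. Consequently, if $\iota^a(\alpha_b)\cdot v=0$ then $\gamma\cdot v=0$ for every $\gamma\in\C(a,a+b)$. Combined with the surjectivity of the composition $\C(a+b,n)\times\C(a,a+b)\to\C(a,n)$ for $n\geqslant a+b$ (which is part of our standing assumptions on $\C$), this yields $\gamma\cdot v=0$ for all $\gamma\in\C(a,n)$ whenever $n\geqslant a+b$. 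Therefore the $\C$-submodule of $F$ generated by $v$ is supported only in degrees $a,\ldots,a+b-1$, so is finite dimensional, contradicting the torsion-freeness of $F$.

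Once $\phi_v$ is known to be injective, Corollary \ref{kCe_0 is injective} (or Theorem \ref{main-result-1}) gives that $k\C e_0$ is an injective $\C$-module, so the embedding splits and the projection $\s^a(F)\twoheadrightarrow k\C e_0$ is a nonzero element of $\Hom_\C(\s^a(F),k\C e_0)$. By Lemma \ref{Q is right adjoint} (applied $a$ times), this corresponds to a nonzero homomorphism $F\to \q^a(k\C e_0)$. Iterating Theorem \ref{main result on q 1} shows that $\q^a(k\C e_0)$ is isomorphic to a finite direct sum of modules of the form $k\C e_i$ with $0\leqslant i\leqslant a$, so composing with a suitable projection produces a nonzero $\C$-module homomorphism $F\to k\C e_n$ for some $n\in\Ob(\C)$, as required.

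The hardest part is the injectivity of $\phi_v$; everything else is a straightforward unravelling of the adjunction together with the structural results already proved. The delicate point in the main step is recognising that a single relation $\iota^a(\alpha_b)\cdot v=0$ propagates, via the surjectivity of composition and the transitive action of $G_{a+b}$, to annihilate all of $\C(a,n)\cdot v$ in sufficiently high degree, so that the cyclic submodule $\langle v\rangle_F$ collapses to a finite dimensional one.
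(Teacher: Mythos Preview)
Your proof is correct and follows the same route as the paper's proof: choose the minimal degree $a$ with $F(a)\neq 0$, map $k\C e_0$ into $\s^a(F)$, use injectivity of $k\C e_0$ to split, pass through the $\s$--$\q$ adjunction, and decompose $\q^a(k\C e_0)$ via Theorem~\ref{main result on q 1}. The only difference is that the paper simply asserts ``since $F$ is torsion-free, the homomorphism $f$ is injective'' without further comment, whereas you supply the argument in full (transitivity of the $G_{a+b}$-action on $\C(a,a+b)$ together with surjectivity of composition forces the cyclic submodule $\langle v\rangle\subseteq F$ to be finite dimensional); this is a genuine detail worth spelling out, and your treatment of it is correct.
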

\begin{proof}
Since $F\neq 0$, there exists a smallest $a\in \Ob(\C)$ such that $F(a)\neq 0$. Thus, $\s^a(F)(0) \neq 0$.
Choose a nonzero element $s$ of $\s^a(F)(0)$ and let
\begin{equation*}
f: k\C e_0 \longrightarrow \s^a(F), \quad \alpha \mapsto \alpha s.
\end{equation*}
Since $F$ is torsion-free, the homomorphism $f$ is injective. But $k\C e_0$ is an injective $\C$-module by Theorem \ref{main-result-1}. Thus, there exists a nonzero homomorphism from $\s^a(F)$ to $k\C e_0$. It follows from Lemma \ref{eckmann-shapiro2} that
\begin{equation*}
\Hom_\C(F, \q^a(k\C e_0)) = \Hom_\C(\s^a(F), k\C e_0) \neq 0.
\end{equation*}
But by Theorem \ref{main result on q 1}, $\q^a(k\C e_0)$ is isomorphic to $k\C e_{n_1} \oplus \cdots \oplus k\C e_{n_r}$ for some $n_1,\ldots, n_r\in \Ob(\C)$. Hence, the result follows.
\end{proof}

Suppose $V$ is a finitely generated $\C$-module. Then there exists $l \in \Ob(\C)$ such that $V$ is generated in degrees $\leqslant l$; clearly, one has $\Hom_\C ( V, k\C e_n) = 0$ for all $n>l$.

\begin{notation}
For any finitely generated $\C$-module $V$, let
\begin{equation*}
\kappa(V) = \sum_{n \in \Ob(\C)} \kappa (V, n),
\end{equation*}
where $\kappa(V, n) = \dim_k \Hom_{\C} (V, k\C e_n)$ for each $n\in \Ob(\C)$.
\end{notation}

\begin{proposition} \label{injection to projective}
Suppose that $\C$ is $\FI_G$. Let $F$ be a finitely generated torsion-free $\C$-module. Then there exists an injective homomorphism from $F$ to $k\C e_{n_1} \oplus \cdots \oplus k\C e_{n_r}$ for some $n_1,\ldots, n_r\in \Ob(\C)$.
\end{proposition}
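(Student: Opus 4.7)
The plan is to proceed by induction on the non-negative integer $\kappa(F)$, which is well-defined because $F$ is finitely generated: if $F$ is generated in degrees $\leqslant l$, then $\Hom_\C(F,k\C e_n)$ is finite-dimensional for every $n$ and vanishes for $n>l$. The base case $\kappa(F)=0$ forces $F=0$ by Lemma \ref{nonzero hom to projective}, and the zero map then serves as a (vacuous) injection into any projective.

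For the inductive step I will choose, via Lemma \ref{nonzero hom to projective}, a nonzero homomorphism $f\colon F\to k\C e_n$, and set $K=\ker f$. The submodule $K$ is finitely generated by Noetherianity and torsion-free as a submodule of $F$. The quotient $F/K$ embeds into $k\C e_n$, which is itself torsion-free in $\FI_G$ (a finite-dimensional submodule of $k\C e_n$ would, upon pushing a nonzero element forward by the standard inclusions, acquire nonzero elements in arbitrarily high degrees, contradicting finite-dimensionality); hence $F/K$ is a nonzero torsion-free $\C$-module, and Lemma \ref{nonzero hom to projective} gives $\kappa(F/K)\geqslant 1$. The crucial input is then Theorem \ref{main-result-1}: each $k\C e_m$ is injective, so applying $\Hom_\C(-,k\C e_m)$ to the short exact sequence $0\to K\to F\to F/K\to 0$ produces a short exact sequence of finite-dimensional vector spaces, yielding $\kappa(K)=\kappa(F)-\kappa(F/K)<\kappa(F)$. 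The inductive hypothesis then supplies an injection $g\colon K\hookrightarrow Q$ with $Q=k\C e_{m_1}\oplus\cdots\oplus k\C e_{m_s}$.

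To assemble the final embedding, I would use that $Q$ is injective as a finite direct sum of the injective modules $k\C e_{m_j}$, so $g$ extends to some $\tilde g\colon F\to Q$. The combined homomorphism $(\tilde g,f)\colon F\to Q\oplus k\C e_n$ has kernel $\ker\tilde g\cap K$, which equals $\ker g=0$ because $\tilde g|_K=g$; hence $(\tilde g,f)$ is the desired injection into a finite direct sum of modules of the form $k\C e_n$. The delicate point in the argument is the dimension-counting step producing $\kappa(K)<\kappa(F)$, which is where Theorem \ref{main-result-1} enters essentially; once that inequality is in hand, the rest is formal assembly from the injectivity of the projectives together with the standard extension-and-kernel reasoning.
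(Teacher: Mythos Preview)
Your proposal is correct and follows essentially the same approach as the paper: induction on $\kappa(F)$, with the base case handled by Lemma \ref{nonzero hom to projective}, the drop $\kappa(K)<\kappa(F)$ coming from the injectivity of each $k\C e_m$ (Theorem \ref{main-result-1}), and the final embedding assembled by extending the kernel's injection using that injectivity. The only minor difference is that you establish $\kappa(F/K)\geqslant 1$ by arguing $F/K$ is torsion-free and re-invoking Lemma \ref{nonzero hom to projective}, whereas the paper simply observes that the inclusion $F/K\hookrightarrow k\C e_n$ already witnesses $\kappa(F/K,n)>0$; this is a stylistic rather than substantive distinction.
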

\begin{proof}
If $\kappa(F)=0$, then $F=0$ by Lemma \ref{nonzero hom to projective}. We shall prove the proposition  by induction on $\kappa(F)$. Suppose $\kappa(F)>0$. Then there exists a nonzero homomorphism $f: F \to k\C e_m$ for some $m\in \Ob(\C)$. Let  $E=\Ker(f)$ and $W=\im(f)$. We have a short exact sequence
\begin{equation} \label{ses for induction}
0 \longrightarrow E \longrightarrow F \longrightarrow W \longrightarrow 0.
\end{equation}
By Theorem \ref{main-result-1}, the $\C$-module $k\C e_n$ is injective, so
\begin{equation*}
\kappa(E,n) + \kappa(W,n) = \kappa (F, n) \quad \mbox{ for each } n\in \Ob(\C).
\end{equation*}
Therefore, $\kappa(E)+ \kappa(W) = \kappa(F)$. But $\kappa(W,m)>0$, so $\kappa(E)<\kappa(F)$.
Observe that $E$ is a finitely generated torsion-free $\C$-module. By induction hypothesis, there exists an injective homomorphism from $E$ to $k\C e_{n_1} \oplus \cdots \oplus k\C e_{n_r}$ for some $n_1,\ldots, n_r\in \Ob(\C)$. Since we also have $W \hookrightarrow k\C e_m$ and the $\C$-modules $k\C e_1, \ldots, k\C e_r$, $k\C e_m$ are injective, it follows from (\ref{ses for induction}) by a standard argument that there exists an injective homomorphism from $F$ to $k\C e_{n_1} \oplus \cdots \oplus k\C e_{n_r}\oplus k\C e_m$.
\end{proof}

\subsection{Proof of Theorem \ref{main-result-2}}
Suppose that $\C$ is $\FI_G$.

\begin{proof}[Proof of Theorem \ref{main-result-2}]
(i) Suppose that $V$ is a finitely generated injective $\C$-module. By Lemma \ref{torsion-pair}, there is a short exact sequence $0 \to T \to V \to F \to 0$ where $T$ is a finite dimensional $\C$-module and $F$ is a torsion-free $\C$-module. For any $\C$-module $U$, there is a long exact sequence
\begin{equation*}
\cdots \to \Hom_\C (U, F) \to \Ext^1_\C (U,T) \to \Ext^1_\C (U, V) \to \cdots .
\end{equation*}
Since $V$ is injective, one has $\Ext^1_\C(U,V)=0$. Since $F$ is torsion-free, one has $\Ext^1_\C(U, T)=0$ whenever $U$ is finite dimensional.

Choose $l\in \Ob(\C)$ such that $T(m)=0$ for all $m>l$. Suppose $W$ is a finitely generated $\C$-module. By Lemma \ref{reduction to C_n}, we can choose $n>l$ such that: $\Ext^1_{\C}(W, T) = \Ext^1_{\C_n} (\jmath^*(W), \jmath^*(T))$ where $\jmath: \C_n\hookrightarrow \C$ denotes the inclusion functor.
Let $U= \jmath_*(\jmath^*(W))$. Observe that $\jmath^*(U) = \jmath^*(W)$ and $\jmath_*(\jmath^*(T))=T$. Thus, by Lemma \ref{eckmann-shapiro1}, one has
\begin{equation*}
\Ext^1_{\C_n} (\jmath^*(W), \jmath^*(T)) = \Ext^1_{\C} (U, T) = 0.
\end{equation*}
Therefore, $\Ext^1_{\C}(W, T) = 0$. It follows from Corollary \ref{injectivity in C-mod} that $T$ is an injective $\C$-module.

We deduce that $V$ is isomorphic to $T\oplus F$, so $F$ is an injective $\C$-module. By Proposition \ref{injection to projective}, it follows that $F$ is a direct summand of a projective $\C$-module. Therefore, $F$ is a projective $\C$-module.

(ii) Suppose that $V$ is a finitely generated $\C$-module generated in degrees $\leqslant l$. We shall prove the result by induction on $l$.

We may assume, without loss of generality, that $V$ has no projective direct summands. By Lemma \ref{torsion-pair}, there is a short exact sequence $0 \to T \to V \to F \to 0$ where $T$ is a finite dimensional $\C$-module and $F$ is a torsion-free $\C$-module. By Proposition \ref{injection to projective}, there is an injective homomorphism
\begin{equation*}
f: F \to k\C e_{n_1} \oplus \cdots \oplus k\C e_{n_r} \quad
\mbox{ for some } n_1,\ldots, n_r\in \Ob(\C).
\end{equation*}

Observe that $F$ is generated in degrees $\leqslant l$ and has no projective direct summands. Since there is no nonzero homomorphism from $F$ to $k\C e_n$ for any $n>l$, we can assume $n_1, \ldots, n_r \leqslant l$. We claim that there is also no nonzero homomorphism from $F$ to $k\C e_l$. Indeed, the image of any homomorphism from $F$ to $k\C e_l$ is a submodule of $k\C e_l$ generated in degrees $\leqslant l$, but any such submodule of $k\C e_l$ is a direct summand of $k\C e_l$ and hence a projective $\C$-module. Since $F$ has no projective direct summands, it follows that any homomorphism from $F$ to $k\C e_l$ must be 0. If $l=0$, this implies that $F=0$, so $V=T$, and we are done by Lemma \ref{inj res of finite dim module}.

Suppose $l>0$. By the above observations, we can assume that  $n_1, \ldots, n_r \leqslant l-1$. Let $W$ be the cokernel of $f$. Since $W$ is generated in degrees $\leqslant l-1$, it follows by induction hypothesis that $W$ has a finite injective resolution
\begin{equation*}
0 \to W  \to J^1 \to \cdots \to J^a \to 0
\end{equation*}
 in the category $\C\module$. From this, we obtain a finite injective resolution
 \begin{equation*}
0 \to F \stackrel{f}{\to} J^0 \to J^1 \to \cdots \to J^a \to 0
\end{equation*}
where $J^0 = k\C e_{n_1} \oplus \cdots \oplus k\C e_{n_r}$. Recall that by Lemma \ref{inj res of finite dim module}, the $\C$-module $T$ has a finite injective resolution in $\C\module$. We conclude by the horseshoe lemma (see \cite[page 37]{Weibel}) that $V$ has a finite injective resolution in $\C\module$.
\end{proof}

\section{Homological approach to representation stability}  \label{last section}

Throughout this section, we assume that $k$ is a splitting field for $G$ of characteristic 0.

\subsection{Simple modules of wreath product groups}
Let $\C$ be $\FI_G$. Then $G_n = G \wr S_n$.

For each $m, l \in \Ob(\C)$, we consider $G_m \times G_l$ as a subgroup of $G_{m+l}$ via
\begin{equation*}
 G_m \times G_l \hookrightarrow G_{m+l}, \quad \big( (f_1,c_1), (f_2,c_2) \big) \mapsto (f_1,c_1)\odot (f_2,c_2).
\end{equation*}
If $X$ is a $kG_m$-module, and $Y$ is a $kG_l$-module, we set
\begin{equation*}
X \circledast Y = kG_{m+l} \otimes_{k(G_m\times G_l)} (X\otimes_k Y).
\end{equation*}
If $A$ is a $k G$-module, and $E$ is a $k S_n$-module, we write $A\wr E$ for the $k G\wr S_n$-module $A^{\otimes n} \otimes_k E$.

We denote by $\irr(G)=\{\chi_1, \ldots, \chi_r\}$ the set of isomorphism classes of simple $k G$-modules; in particular, let $\chi_1 \in \irr(G)$ be the trivial class. For each $\chi\in\irr(G)$, let $A(\chi)$ be a simple $k G$-module belonging to the isomorphism class $\chi \in \irr(G)$. Recall that the isomorphism classes of simple $k S_n$-modules are parametrized by the partitions of $n$. For each partition $\lambda$ of $n$, let $E(\lambda)$ be a simple $k S_n$-module whose isomorphism class corresponds to $\lambda$.

If $\la$ is a partition-valued function on $\irr(G)$, we let
\begin{equation*}
\widetilde{L}(\la) = \Big( A(\chi_1) \wr E(\la(\chi_1)) \Big) \circledast \cdots \circledast \Big( A(\chi_r) \wr E(\la(\chi_r)) \Big).
\end{equation*}
The following result on the classification of simple $k G\wr S_n$-modules is well-known (see \cite{Macdonald}).

\begin{theorem}
Suppose that $k$ is a splitting field for $G$ of characteristic 0. Then the set of $\widetilde{L}(\la)$ for all partition-valued functions $\la$ on $\irr(G)$ with $|\la|=n$ is a complete set of non-isomorphic simple $k G \wr S_n$-modules.
\end{theorem}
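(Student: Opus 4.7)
My plan is to apply Clifford theory to the normal subgroup $G^n \trianglelefteq G \wr S_n$, whose quotient is $S_n$. Because $k$ has characteristic zero and is a splitting field for $G$, tensor products make $k$ a splitting field for $G^n$ as well, and characteristic zero makes $k$ a splitting field for every $S_m$; this ensures the Clifford correspondence is a clean bijection with no Schur-multiplier obstruction.

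First I would write the simple $kG^n$-modules as tensor products $A(\chi_{i_1}) \otimes_k \cdots \otimes_k A(\chi_{i_n})$ indexed by functions $\mathbf{i}\colon [n] \to \irr(G)$, with $S_n$ acting by permuting tensor factors. The orbits are classified by multiplicity functions $m\colon \irr(G) \to \Z_+$ with $\sum_\chi m(\chi)=n$. For each $m$ I choose the representative $W_m = \bigotimes_\chi A(\chi)^{\otimes m(\chi)}$ (collecting like factors into consecutive blocks), and identify its stabilizer in $G \wr S_n$ as $H_m = \prod_\chi (G \wr S_{m(\chi)})$. The module $W_m$ extends canonically to a $kH_m$-module $\widetilde{W}_m$: on each block, let $S_{m(\chi)}$ permute the $A(\chi)^{\otimes m(\chi)}$ tensor factors.

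The next step is the Clifford correspondence: since $\widetilde{W}_m$ extends $W_m$ and $k$ is a characteristic-zero splitting field, the simple $kH_m$-modules whose restriction to $G^n$ is $W_m$-isotypic are exactly $\widetilde{W}_m \otimes_k N$ for $N$ a simple module of $H_m/G^n \cong \prod_\chi S_{m(\chi)}$, and induction from $H_m$ to $G \wr S_n$ sets up a bijection between these and the simple $k[G\wr S_n]$-modules whose $G^n$-restriction contains $W_m$. The simple modules of $\prod_\chi S_{m(\chi)}$ are precisely the tensor products $\bigotimes_\chi E(\lambda(\chi))$ for partitions $\lambda(\chi) \vdash m(\chi)$. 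Packaging $m$ and the $\lambda(\chi)$ into a single partition-valued function $\la$ on $\irr(G)$ with $|\la|=n$, induction in stages together with the definition of $\circledast$ identifies the induced module with $\widetilde{L}(\la)$. Distinct $\la$ correspond either to different orbits or to different Clifford data, so the $\widetilde{L}(\la)$ are pairwise non-isomorphic and exhaust the simple $k G \wr S_n$-modules.

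The most delicate point is the Clifford bijection together with the existence of the extension $\widetilde{W}_m$; the latter is automatic because the wreath product is a semidirect product $G^n \rtimes S_n$ (so the relevant $2$-cocycle vanishes), and the former is standard once that extension is in hand, using Maschke's theorem for all the finite groups involved. The remaining identification of the induced module with $\widetilde{L}(\la)$ is a bookkeeping exercise unwinding the definitions using induction in stages.
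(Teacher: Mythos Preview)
Your Clifford-theory outline is correct and is the standard route to this classification; the paper itself does not give a proof but simply cites the result as well known (Macdonald). So there is no ``paper's approach'' to compare against beyond that citation.

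One small imprecision worth flagging: the parenthetical claim that the relevant $2$-cocycle vanishes \emph{because} $G\wr S_n$ is a semidirect product is not quite the right justification. A semidirect product decomposition $N\rtimes Q$ does not by itself guarantee that every $Q$-stable simple $kN$-module extends to $N\rtimes Q$; the obstruction in $H^2(Q,k^\times)$ can still be nontrivial. What actually makes the extension $\widetilde{W}_m$ exist here is exactly what you wrote earlier: the stabilizer $\prod_\chi S_{m(\chi)}$ acts on $W_m=\bigotimes_\chi A(\chi)^{\otimes m(\chi)}$ by permuting identical tensor factors within each block, and this visibly commutes with the $G^n$-action. That explicit construction is the content; the semidirect-product remark is a red herring. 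Since you already gave the explicit extension, the argument stands.
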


\subsection{Proof of Theorem \ref{rep stable thm}}
The homological properties of the category $\C\module$ proved in the previous section allow one to give a quick proof that finite generation implies condition (RS3).

For any partition-valued function $\la$ on $\irr(G)$, and integer $n\geqslant |\la|+a$ where $a$ is the biggest part of $\la(\chi_1)$, let
\begin{equation*}
L(\la)_n = \widetilde{L}(\la[n]).
\end{equation*}

\begin{proof}[Proof of Theorem \ref{rep stable thm}]
As explained in subsection \ref{rep stability}, it only remains to verify that condition (RS3) holds for finitely generated projective $\C$-modules. But by \cite[Theorem 11.18]{Dieck}, any finitely generated projective $\C$-module is a direct sum of $\C$-modules of the form $k \C e_m \otimes_{k G_m} L(\la)_m$ for some $m\in \Ob(\C)$ and partition-valued function $\la$ on $\irr(G)$.

Let $n\geqslant 2m$. Denote by $\mu$ the trivial partition of $n-m$. One has
\begin{equation*}
k\C (m,n) = k G_m \circledast \Big(A(\chi_1) \wr E(\mu) \Big).
\end{equation*}
Therefore,
\begin{multline*}
 k\C (m,n) \otimes_{k G_m} L(\la)_m
=  L(\la)_m \circledast \Big(A(\chi_1) \wr E(\mu) \Big)\\
= \Big( A(\chi_1) \wr E(\la[m](\chi_1)) \Big) \circledast \Big(A(\chi_1) \wr E(\mu) \Big)  \circledast \left( \bigCircledast_{i=2}^r  A(\chi_i) \wr E(\la(\chi_i)) \right).
\end{multline*}
The rest of the proof is same as \cite[Lemma 2.3]{Hemmer}. By Pieri's formula,
\begin{equation*}
 \Big( A(\chi_1) \wr E(\la[m](\chi_1)) \Big) \circledast \Big(A(\chi_1) \wr E(\mu) \Big) \\
= \bigoplus_{\nu \in P(n)} A(\chi_1) \wr E(\nu),
\end{equation*}
where $P(n)$ denotes the set of all partitions $\nu$ whose Young diagram can be obtained from the Young diagram of $\la[m](\chi_1)$ by adding $n-m$ boxes with no two in the same column. Let $\hbar: P(n) \to P(n+1)$ be the map which assigns to $\nu\in P(n)$ the partition $\hbar(\nu)\in P(n+1)$ whose Young diagram is obtained from the Young diagram of $\nu$ by adding a box in the first row. It is plain that $\hbar$ is injective. Since $n\geqslant 2m$, the map $\hbar$ must also be surjective. The result follows.
\end{proof}

\end{document}